\newcommand{\R}{{\mathbb{R}}}          
\newcommand{\g}{{\mathfrak{g}}}       %
\newcommand{\sympol}{{e}}       %
\newcommand{\XIS}{{\mathfrak{X}}}
\newcommand{\rr}{\rightarrow}
\newcommand{\lrr}{\longrightarrow}
\newcommand{\calD}{{\cal D}}             %
\newcommand{\calF}{{\cal F}}             %
\newcommand{\calI}{{\cal I}}             %
\newcommand{\calR}{{{\cal R}^\xi}}             %
\newcommand{\na}{{\nabla}}
\newcommand{\tr}[1]{{\mathrm{tr}}\,{#1}}
\newcommand{\sg}{{\mathrm{sg}}\,}
\newcommand{\dx}{{\mathrm{d}}}
\newcommand{\inv}[1]{{#1}^{-1}}
\newcommand{\cinf}[1]{{\mathrm{C}}^\infty_{#1}}
\newcommand{\vol}{{\mathrm{vol}}}
\newcommand{\ric}{{\mathrm{Ric}\,}}
\newcommand{\tsb}{{{\cal S}}}
\newcommand{\tsbu}{{{S_1M}}}
\newtheorem{teo}{Theorem}[section]
\newtheorem{prop}{Proposition}[section]
\newenvironment{Rema}[1][Remark.]{\begin{trivlist}
\item[\hskip \labelsep {\bfseries #1}]}{\end{trivlist}}
\def\cyclic{\mathop{\kern0.9ex{{+}
\kern-2.2ex\raise-.28ex\hbox{\Large\hbox{$\circlearrowright$}}}}\limits}
\title{A fundamental differential system of
\\
 Riemannian geometry}
\author{R. Albuquerque}
\begin{document}


\maketitle


\markright{\sl\hfill  R. Albuquerque \hfill}

\begin{abstract}

We discover a fundamental exterior differential system of Riemannian geo\-me\-try; indeed, an intrinsic and invariant global system of differential forms of degree $n$ associated to any given oriented Riemannian manifold $M$ of dimension $n+1$. The framework is that of the tangent sphere bundle of $M$. We generalise to a Riemannian setting some results from the theory of hypersurfaces in flat Euclidean space. We give new applications and examples of the associated Euler-Lagrange differential systems.

\end{abstract}


\ 
\vspace*{3mm}\\
{\bf Key Words:} tangent sphere bundle, Riemannian manifold, exterior differential system, hypersurface, Euler-Lagrange system.
\vspace*{2mm}\\
{\bf MSC 2010:} Primary: 53C21, 58A15, 58A32; Secondary: 53C17, 53C25, 53C38, 53C42

\vspace*{14mm}

\section{Introduction}

It is a remarkable feature of differential geometry that so many questions may be addressed through the theory of exterior differential systems. In the present article we concentrate on a well-known contact system, which we expand to a new global invariantly defined exterior differential system of Riemannian geometry.

Given an oriented smooth Riemannian $n+1$-dimensional manifold $M$, in Section \ref{TnedsoS} we start by recalling the metric contact structure defined by the Sasaki metric and the non-vanishing canonical 1-form $\theta$ on the total space of the constant radius $s>0$ tangent sphere bundle $S_sM\lrr M$. Then we turn to our main purpose, which is the study of a system of natural $n$-forms existing always on $S_sM$:
\begin{equation}
\alpha_0,\ \alpha_1,\ \ldots\,,\ \alpha_n .
\end{equation}
Each of these $n$-forms on the contact $2n+1$-manifold $(S_sM,\theta)$ and their $\cinf{}$ linear combinations assume the natural role of Lagrangian forms. So they induce
variational principles of the underlying exterior differential system. The
study of the Lagrangians $\alpha_i$,\ $0\leq i\leq n$, may be pursued through various fields. In order to be succinct, let us assume $s=1$ for the moment. One simple structural equation is then
\begin{equation}
 *\alpha_i=(-1)^{n-i}\theta\wedge \alpha_{n-i} .
\end{equation}

Suppose we have an \textit{adapted} local coframe on $S_1M$, a basis of 1-forms $e^0$, $e^1,\ldots,e^n$, $e^{n+1},\ldots,e^{2n}$, where $\theta=e^0$, the $e^0,e^1,\ldots,e^n$ are horizontal and the remaining are vertical (the definition of adapted coframe is given below, following the usual tangent manifold tensor decomposition and with the $e^i$ corresponding to $e^{i+n}$). Then, in case $n=1$, we have a global coframing of $\theta$ and the two 1-forms $\alpha_0=e^1$ and $\alpha_1=e^2$. The following formulas for a surface, where $c$ denotes the Gaussian curvature of $M$, are well-known as Cartan Structural Equations. They have been deduced in a purely Riemannian fashion in \cite[Chapter 7.2]{SingerThorpe}:
\begin{equation}
\begin{split} 
  \dx\theta=\alpha_1\wedge\alpha_0 ,\qquad\qquad\ \  \\
 \dx\alpha_1=c\,\alpha_0\wedge\theta ,\quad\qquad \dx\alpha_0=\theta\wedge\alpha_1 .
\end{split}
\end{equation}

In this article, in Theorem \ref{derivadasdasnforms}, we deduce general formulas for $\dx\alpha_i$, $i=0,1,\ldots,n$, in all dimensions. These structural equations lead to new applications. In case $n=2$, we have ($e^{jk}=e^j\wedge e^k$)
\begin{equation}\label{alfascomniguala2}
  \alpha_0=e^{12},\qquad\alpha_1=e^{14}+e^{32},\qquad\alpha_2=e^{34} ,
\end{equation}
and then
\begin{equation}
\begin{split} 
 -\frac{1}{2}\dx\theta\wedge\dx\theta=\alpha_0\wedge\alpha_2
   =-\frac{1}{2}\alpha_1\wedge\alpha_1 ,  \qquad\qquad \\
 \dx\alpha_2=\calR\alpha_2 ,\qquad \dx\alpha_1=2\theta\wedge\alpha_2 -r\,\vol ,
\qquad \dx\alpha_0=\theta\wedge\alpha_1 
\end{split} 
\end{equation}
where $\vol=\theta\wedge\alpha_0$ is the pullback of the volume-form of $M$ and $\calR\alpha_2$ and $r$ are a curvature dependent 3-form and function respectively. 

It is quite remarkable that for any given metric of constant sectional curvature $c$
we have the formulas, for all $i=0,\ldots,n$,
\begin{equation}\label{dalphaicomcsc}
\begin{split}
  \dx\alpha_i &=
\theta\wedge\bigl((i+1)\,\alpha_{i+1}-c(n-i+1)\,\alpha_{i-1}\bigr) , \\
 \dx(*\alpha_i)&=0 .
\end{split}
\end{equation}
For $c=0$ we may say these are the differential versions of the quite well-known Hsiung-Minkowski identities.

We also show how the forms relate to certain calibrated geometries and, at least, to one special Riemannian geometry of ${\mathrm{G}}_2$-\textit{twistor} space. The latter consists of a natural ${\mathrm{G}}_2$ structure existing always on $S_1M$ for any given oriented Riemannian 4-manifold $M$, firstly discovered in \cite{AlbSal1} and which brought a new field of interaction with Einstein metrics.

In Theorem \ref{metricaEinsteinealpha2} we find the essential result which is behind the case of ${\mathrm{G}}_2$-twistors. We prove that an oriented Riemannian $n+1$-manifold is Einstein if and only if $\alpha_{n-2}$ is coclosed. The index $i$ in $\alpha_i$ certainly relates to a degree of Riemannian complexity, notwithstanding $\alpha_n,\alpha_{n-1}$ being always coclosed and all the $n$-forms being coclosed if $M$ has constant sectional curvature.

Further applications of the natural Lagrangians go through an analysis of the
Euler-Lagrange equations of the first few, $i=0,1,2$, functionals
$\calF_i(N)=\int_{\hat{N}}\alpha_i$ on the set of submanifolds $N\hookrightarrow M$
with natural lift $\hat{N}$ into $S_1M$. These follow as applied in similar context by known references in metric problems on space-forms, cf. \cite{BGG,Rei}. Indeed the pullback of the $\alpha_i$ coincides with $(-1)^i$ times the $i$th-symmetric polynomial on the principal curvatures of ${N}$, now in a general Riemannian framework. With these methods we are able to give in Theorem \ref{teo_HsiungMinkowski} a new proof of the Hsiung-Minkowski identities in Euclidean space, cf. \cite{Hsiung,Katsurada}. One may also study problems regarding linear Weingarten equations, whose significance is revealed through the flat metric case in \cite{BGG}, but we do not pursue these here.

Throughout the text the reader will notice that we focus on some of the consequences of \eqref{dalphaicomcsc}. As the equations show, the interplay with submanifold
theory of space-forms seems to be a most promising feature of the new 
$n$-forms. For instance, \eqref{dalphaicomcsc} yields partly the variational principle statement of \cite[Theorem B]{Rei}, cf. Theorem \ref{teo_partofTheoremBofRei}.

The question of finding infinitesimal symmetries and conservation laws of Lagrangian
systems is recalled below and applied to our system. The flat case is known to be quite difficult; we give some results for an ambient manifold of any constant sectional curvature $c$ and for a Lagrangian system with certain constant coefficients.

In searching the literature, we conclude that the differential system of the $\theta$, $\alpha_0,\ldots,\alpha_n$ appears also in \cite[p. 32]{BGG}, in a similar form. It is used in contact systems applied to studies of the geometry of hypersurfaces in Euclidean space $\R^{n+1}$. The particular case of the structural equations \eqref{dalphaicomcsc}, with $c=0$, may thus be said to be already known.

The author of this article is very grateful to a generous referee, whose clever reading and comments much benefited the final text. He is also grateful to a Marie Curie Fellowship he received from the ``FP7 Program'' of the European Union after the findings which can be seen here below.




\section{The natural exterior differential system on $S_sM$}
\label{TnedsoS}

\subsection{Geometry of the tangent sphere bundle}
\label{Geomofthetangentspherebundle}

Let $M$ be an $n+1$-dimensional smooth Riemannian manifold with metric tensor $g=\langle\ ,\ \rangle$. We need to recall some basic differential geometry concepts for the study of the total space of the tangent bundle $\pi:TM\lrr M$. The theory has been thoroughly developed by various mathematicians in the last six decades. Our  technique, recalled below, was introduced in \cite{Alb5,Alb3,Alb4}.

The total space $TM$ is well-known to be a $2n+2$-dimensional smooth manifold. A canonical atlas arises from any given atlas of $M$ and induces a natural isomorphism $V:=\ker\dx\pi\simeq\pi^*TM$ of vector bundles \textit{over} $TM$. This clearly agrees fibrewise with the tangent bundle to the fibres of $TM$. Supposing just a linear connection $\na$ is given on $M$, then the tangent bundle of $TM$ splits as $TTM=H\oplus V$, where $H$ is a sub-vector bundle (depending on $\na$). Clearly the \textit{horizontal} sub-bundle $H$ is also isomorphic to $\pi^*TM$ through the map $\dx\pi$. We may thus define an endomorphism 
\begin{equation}\label{Bendomorphism}
 B:TTM\lrr TTM
\end{equation}
which transforms $H$ into $V$ in the obvious way and vanishes on the \textit{vertical} sub-bundle. $B$ is called the \textit{mirror} map.

There is a canonical vertical vector field $\xi$ over $TM$, well defined by $\xi_u=u,\ \forall u\in TM$. Note that $\xi$ is independent of the connection and the vector $\xi_u$ lies in the vertical side $V_u,\ \forall u\in TM$. Henceforth there exists a unique horizontal canonical vector field, here formally denoted $B^{\mathrm{t}}\xi\in H$, such that $B(B^{\mathrm{t}}\xi)=\xi$. Such vector field is known as the \textit{geodesic spray} of the connection, cf. \cite{Sakai}. Proceeding in this reasoning, we let $\na^*=\pi^*\na$ denote the pullback connection on $\pi^*TM$ and let $(\cdot)^h,\:(\cdot)^v$ denote the projections of tangent vectors onto their $H$ and $V$ components. Then, $\forall w\in TTM$,
\begin{equation}\label{pullbackconnectionproperties}
\na^*_w\xi=w^v\qquad\ \mbox{and}\qquad\ H=\ker(\na^*_\cdot\xi) .
\end{equation}

The manifold $TM$ also inherits a linear connection, still denoted $\na^*$, which is just $\na^*\oplus\na^*$ according with the canonical decomposition
\begin{equation}
 TTM=H\oplus V\simeq\pi^*TM\oplus\pi^*TM .
\end{equation}
Of course, the connecting endomorphism $B$ is parallel for such $\na^*$. Furthermore, the theory tells us that, for a torsion-free connection $\na$, the torsion of $\na^*$ is given, $\forall v,w\in TTM$, by
\begin{equation}\label{torsaodenablaasterisco}
 T^{\na^*}(v,w)=R^{\na^*}(v,w)\xi=\pi^*R^\na(v,w)\xi:=\calR(v,w)
\end{equation}
where $R^{\na^*},R^\na$ denote the curvature tensors (the proof of \eqref{torsaodenablaasterisco} being recalled in Section \ref{Pomf}). This tells us the torsion is vertically valued and only depends on the horizontal directions. In the third identity the tensor $\calR\in\Omega^2_{TM}(V)$ is defined.

We finally turn to the given metric tensor $g$ on $M$. Recall the Sasaki metric
on $TM$, here also denoted by $g$, is given naturally by the pullback of the metric
on $M$ both to $H$ and $V$. The restriction $B:H\rr V$ of the mirror map is then a parallel and metric-preserving morphism. We employ the notation $B^{\mathrm{t}}$ for the adjoint endomorphism of $B$, which equals the inverse in the case of the referred restriction. Moreover, we have that $J=B-B^{\mathrm{t}}$ is the well-known almost complex structure on $TM$. In particular our manifold is always oriented. Of course, $\na g=0$ implies $\na^*g=0$.

Certainly one may continue to establish the construction with any metric connection, but now we assume the given $\na$ is the Levi-Civita connection of $M$.

Let us  consider the tangent sphere bundle $S_sM\lrr M$ with any fixed constant
radius $s>0$,
\begin{equation}
 S_sM=\{u\in TM:\ \|u\|=s\} .
\end{equation}
This hypersurface is also given by the locus of $\langle\xi,\xi\rangle=s^2$. Using
\eqref{pullbackconnectionproperties} we deduce $T(S_sM)=\xi^\perp\subset
TTM$. Since $TM$ is orientable, $S_sM$ is also always orientable --- the restriction of $\xi/\|\xi\|$ being a unit normal. Moreover, for any $u\in TM\backslash0$, we may find a local horizontal orthonormal frame $e_0,e_1\ldots,e_n$, on a neighbourhood of $u\neq0$, such that $e_0=B^{\mathrm{t}}\xi/\|\xi\|$ (the existence of such moving frame relies on the smoothness of the Gram-Schmidt process and the action of the orthogonal group).

Note that any frame in $H$ extended with its mirror in $V$ clearly determines an orientation on the manifold $TM$. We always adopt the order `first $H$ then $V$'.

The mere existence of a parallel $n+1$-form on $M$ would be interesting enough for our purposes, but here we assume once and for all the manifold is oriented. We let
\begin{equation}\label{alpha_n_concept}
\alpha_n\, =\, \frac{\xi}{\|\xi\|}\lrcorner(\inv{\pi}\vol_M)
\end{equation}
where $\inv{\pi}\vol_M$ is the vertical pullback of the volume-form of $M$.
And let $\vol$ denote the pullback by $\pi$ of the volume-form of $M$:
\begin{equation}
 \vol=\pi^*\vol_M .
\end{equation}
With the dual horizontal coframing $\{e^0,e^1,\ldots,e^n\}$, where $e^0=e_0^\flat$, clearly the identity $\vol=e^0\wedge\cdots\wedge e^n$ is satisfied. Joining in the \textit{mirror} subset $\{\frac{\xi^\flat}{\|\xi\|},e^{n+1},\ldots,e^{2n}\}$, this is, the forms defined by $e^{n+i}(e_{j+n})=e^i(e_j)=\delta_j^i$, 
$e^{n+i}(e_j)=e^i(e_{j+n})=0$, $\forall i,j$,
we may then use the following volume-form of $TM$:
\begin{equation}
  \begin{split}
 \vol_{TM}\ =&\ (-1)^{n+1} e^0\wedge e^1\wedge\cdots\wedge e^n\wedge\frac{\xi^\flat}{\|\xi\|}\wedge
e^{n+1}\wedge\cdots\wedge e^{(2n)} \\=\ &
\frac{1}{s}\,\xi^\flat\wedge\vol\wedge\alpha_n .
  \end{split}
\end{equation}
In this way, the orientation $(\xi/s)\lrcorner\vol_{TM}$ of the Riemannian submanifold $S_sM$ agrees with
$\vol\wedge\alpha_n=e^{01\cdots(2n)}=e^0\wedge e^1\wedge\cdots\wedge e^n\wedge\cdots\wedge e^{(2n)}$. 
We shall assume always the canonical orientation is given by $\vol\wedge\alpha_n$ on $S_sM$ (we omit the notation of the restriction of those forms to the hypersurface). A direct orthonormal frame as the one introduced previously is said to be \textit{adapted}.

To ease the notation we let $\tsb$ denote the total space $S_sM$ with any $s$ freely chosen, only recalled when necessary.

The submanifold $\tsb$ admits a metric linear connection $\na^\divideontimes$, as we shall see next. For any vector fields $y,z$ on $\tsb$, the covariant derivative $\na^*_yz$ is well-defined and, admitting $y,z$ perpendicular to $\xi$, we just have to add a correction term:
\begin{equation}\label{nablastar}
  \begin{split}
 \na^\divideontimes_yz\ =\ \na^*_yz-\frac{1}{s^2}\langle\na^*_yz,\xi\rangle\xi \ =\ 
\na^*_yz+\frac{1}{s^2}\langle y^v,z^v\rangle\xi .
  \end{split}
\end{equation}
Since $\langle\calR(y,z),\xi\rangle=0$, we see from (\ref{torsaodenablaasterisco}) that a torsion-free connection $D$ is given by $D_yz=\na^\divideontimes_yz-\frac{1}{2}\calR(y,z)$. We remark $D$ is not the Levi-Civita connection in general, with further details on homotheties, topology and metric connections on $\tsb$ being found in \cite{Alb5,Alb3,Alb4}.

\subsection{The contact structure and the fundamental differential forms}
\label{Thenewnforms}

Continuing the above, we let $\theta$ denote the 1-form on $\tsb$
\begin{equation}\label{mu}
\theta=\langle\xi,B\,\cdot\,\rangle=s\,e^0 .
\end{equation}
The differential geometry of $\tsb$ is much determined by the following proposition, whose proof shall be recalled in Section \ref{Pomf}. The result was essentially deduced by Y.~Tashiro through chart computations, cf. \cite{Blair,Tash}.
\begin{prop}[Y.~Tashiro]      \label{dmu}
 We have $\dx\theta=e^{(1+n)1}+\cdots+e^{(2n)n}$. In other words, $\forall v,w\in
T\tsb$,
\begin{equation}\label{dmu1}
 \dx\theta(v,w)=\langle v,Bw\rangle-\langle w,Bv\rangle .
\end{equation}
\end{prop}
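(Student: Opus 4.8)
The plan is to compute $\dx\theta$ by means of the pull-back connection $\na^*$ on $TM$, which is perfectly suited to this 1-form: it is metric ($\na^*g=0$), it parallelises $B$ and hence also $B^{\mathrm{ad}}$, and its torsion is the purely vertical tensor $\calR$ of \eqref{torsaodenablaasterisco}. Since $\theta=(B^{\mathrm{ad}}\xi)^\flat=\langle\xi,B\,\cdot\,\rangle$ is in fact defined on all of $TM$ (away from the zero section, $\xi$ is smooth and $B^{\mathrm{ad}}\xi$ is a genuine vector field), I would first establish the identity on $TTM$ and then restrict it to the hypersurface $\tsb$, using that pull-back along the inclusion $\tsb\hookrightarrow TM$ commutes with $\dx$. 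The starting point is the torsion-corrected Cartan formula, valid for any linear connection and any $1$-form:
\[
\dx\theta(v,w)=(\na^*_v\theta)(w)-(\na^*_w\theta)(v)+\theta\big(T^{\na^*}(v,w)\big),\qquad\forall v,w\in TTM.
\]

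For the two derivative terms I would exploit that $B^{\mathrm{ad}}\xi$ is trivial to differentiate: since $\na^*B^{\mathrm{ad}}=0$ and $\na^*_v\xi=v^v$ by \eqref{pullbackconnectionproperties}, one gets $\na^*_v(B^{\mathrm{ad}}\xi)=B^{\mathrm{ad}}(v^v)$. As $\na^*$ is metric, differentiating $\theta(\cdot)=\langle B^{\mathrm{ad}}\xi,\cdot\rangle$ gives $(\na^*_v\theta)(w)=\langle B^{\mathrm{ad}}(v^v),w\rangle=\langle v^v,Bw\rangle$. Finally $Bw=B(w^h)$ lies in $V$, so only the vertical part of $v$ contributes and $\langle v^v,Bw\rangle=\langle v,Bw\rangle$; hence $(\na^*_v\theta)(w)=\langle v,Bw\rangle$ and symmetrically $(\na^*_w\theta)(v)=\langle w,Bv\rangle$.

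It then remains to dispose of the torsion term, and here is the crux: $T^{\na^*}(v,w)=\calR(v,w)$ is vertically valued, whereas $\theta=\langle\xi,B\,\cdot\,\rangle$ annihilates $V$ because $B$ vanishes on the vertical sub-bundle, so $\theta(\calR(v,w))=\langle\xi,B\,\calR(v,w)\rangle=0$. Assembling the three pieces yields $\dx\theta(v,w)=\langle v,Bw\rangle-\langle w,Bv\rangle$ on all of $TTM$, which restricts verbatim to $T\tsb$ and proves \eqref{dmu1}. The coframe expression $\dx\theta=e^{(1+n)1}+\cdots+e^{(2n)n}$ then follows by evaluating this $2$-form on an adapted frame: one has $Be_0=\xi/s$, which is normal to $\tsb$ (and $B^{\mathrm{ad}}e_0=0$), so $e_0$ contributes nothing, while $Be_i=e_{n+i}$ for $1\le i\le n$ by the very definition of the mirror coframe, whence $\langle v,Bw\rangle-\langle w,Bv\rangle=\sum_{i=1}^n(e^{n+i}\wedge e^i)(v,w)$.

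The step demanding the most care is the torsion bookkeeping. One must insist on a connection for which $B$ (equivalently $B^{\mathrm{ad}}$) is parallel, so that the derivative terms collapse to the clean expression above, while simultaneously retaining the torsion contribution rather than tacitly discarding it. Its vanishing is not automatic: it rests precisely on the two structural facts that $\calR$ is vertical and that $B$ kills $V$, so it is worth isolating that mechanism explicitly rather than folding it into the routine manipulation of the Cartan formula.
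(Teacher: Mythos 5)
Your proof is correct and hinges on the same structural facts as the paper's --- metricity of $\na^*$, parallelism of $B$ (hence of $B^{\mathrm{ad}}$) together with $\na^*_v\xi=v^v$, and verticality of the torsion $\calR$ combined with $B_{|V}=0$ --- but it packages the torsion differently. The paper works directly on the hypersurface $\tsb$ with the torsion-free connection $D=\na^\bigstar-\tfrac{1}{2}\calR$ from section \ref{Geomofthetangentspherebundle}: it computes $(D_y\theta)z=\langle y^v,Bz\rangle$ and antisymmetrizes with the plain formula $\dx\theta(y,z)=(D_y\theta)z-(D_z\theta)y$, the irrelevance of the torsion being hidden inside the step $\theta(D_yz)=\langle\xi,B\na^*_yz\rangle$, where $B$ annihilates both the vertical correction term of $\na^\bigstar$ and the $\calR$-term of $D$. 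You instead stay on the ambient $TM$ with the torsionful $\na^*$, invoke the torsion-corrected Cartan formula, and isolate the cancellation $\theta(T^{\na^*}(v,w))=\theta(\calR(v,w))=0$ as an explicit step before restricting along $\tsb\hookrightarrow TM$ (legitimate, since pull-back by the inclusion commutes with $\dx$, and $\theta$ is indeed defined on all of $TM$, as the paper itself notes in section \ref{PfSThenewnforms}). The two routes are equivalent bookkeeping for the same mechanism: yours needs neither $\na^\bigstar$ nor $D$ and exhibits exactly where verticality of the torsion enters, while the paper's choice of a torsion-free $D$ lets it quote the standard antisymmetrization formula and sets up a connection it reuses in later computations (e.g.\ in the proof of Theorem \ref{derivadasdasnforms}). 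Your concluding frame evaluation ($Be_0=\xi/s\perp T\tsb$ and $Be_i=e_{n+i}$ for $i\geq1$) correctly recovers the coframe expression $\dx\theta=e^{(1+n)1}+\cdots+e^{(2n)n}$.
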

It follows that $(\tsb,\theta)$ is a contact manifold. In fact, $
 \theta\wedge(\dx\theta)^n=(-1)^{\frac{n(n+1)}{2}}n!s\,\vol\wedge\alpha_n\neq0$,
as we shall care to establish later.

We observe here that the expression of $\dx\theta$ is not linear in $s$, lest one
should be driven to conclude so. This shows the relative importance of the computations being done with any radius albeit constant.
\begin{Rema}
We may also describe a \textit{metric} contact structure on $\tsb$. Finding the correct
weights on the fixed metric, the 1-form $\theta$ and the so-called Reeb vector field,
which of course is a multiple of $B^{\mathrm{t}}\xi$, gives
\begin{equation}
 \hat{g}=\frac{1}{4s^2}g,\quad \hat{\xi}=2B^{\mathrm{t}}\xi,\quad
\eta=\hat{g}(\hat{\xi},\,\cdot\,)
=\frac{1}{2s^2}\theta,\quad \varphi=B-B^{\mathrm{t}}-2\xi\otimes\eta .
\end{equation}
Then $\eta(\hat{\xi})=1,\ \varphi(\hat{\xi})=0,\
\varphi^2=-1+\eta\otimes\hat{\xi},\
\hat{g}(\varphi\,\cdot\,,\varphi\,\cdot\,)=\hat{g}-\eta\otimes\eta$ and
$\dx\eta=2\hat{g}(\,\cdot\,,\varphi\,\cdot\,)$ as required. Such metric contact structure is Sasakian if and only if $M$ has constant sectional curvature $1/s^2$, a result which is first proved for $s=1$ by Tashiro in \cite{Tash}.
\end{Rema}

Before finally introducing the $n+1$ natural $n$-forms on $\tsb$ with any constant radius $s$, let us define for any $0\leq i\leq n$ the rational number
\begin{equation}\label{enei}
  n_i=\frac{1}{i!(n-i)!} .
\end{equation}
Continuing with the notation and the adapted frame introduced earlier, we have first
with $\inv{\pi}\vol_M$ the vertical pullback, cf. \eqref{alpha_n_concept},
\begin{equation}\label{alpha}
\alpha_n\, =\, \frac{\xi}{\|\xi\|}\lrcorner(\inv{\pi}\vol_M)=
e^{(n+1)}\wedge\cdots\wedge e^{(2n)}.
\end{equation}
Then, finally, for each $i$, we define the $n$-form $\alpha_i$ by
\begin{equation}\label{alpha_i}
 \alpha_i=n_i\,\alpha_n\circ(B^{n-i}\wedge1_{T\tsb}^{i}) ,
\end{equation}
this is, $\forall v_1,\ldots,v_n\in T\tsb$,
\begin{equation}\label{alpha_itravez}
 \alpha_i(v_1,\ldots,v_n) = n_i\sum_{\sigma\in S_n} 
  \mathrm{sg}(\sigma)\,\alpha_n(Bv_{\sigma_1},\ldots,Bv_{\sigma_{n-i}}, v_{\sigma_{n-i+1}},\ldots,v_{\sigma_n}) .
\end{equation}
Of course, $S_n$ denotes the symmetric group and
$1=1_{T\tsb}$ denotes the identity en\-do\-mor\-phism of $T\tsb$. Note that $B^{i}=\wedge^{i}B=B\wedge\cdots\wedge B$ with $i$ factors. The notation
$\alpha\circ(B^{i}\wedge1^{n-i})$ shall be duly justified in Section \ref{Pomf}. Notice $\alpha_n$ is unambiguously defined since $\alpha_n\circ\wedge^n1=n!\,\alpha_n$. We remark also that $\alpha_0=e^{1\ldots n}$, which justifies the introduction of the weight $n_i$. For convenience of notation we define $\alpha_{n+1}=\alpha_{-1}=0$.

The reader may see all the $\alpha_i$ for the cases $n=1,2$ in the Introduction
section.

Let $*$ denote the Hodge star-operator on $\tsb$. Of course, it satisfies
$**=1_{\Lambda^*_{\tsb}}$. 
\begin{prop}[Basic structure equations]   \label{Basicstrutequations}
For any $0\leq i\leq n$ we have:
\begin{equation}   \label{Basicstrutequations1}
 *\theta=
s\,\alpha_0\wedge\alpha_n=\frac{s(-1)^{\frac{n(n+1)}{2}}}{n!}\,(\dx\theta)^{n}  ,
\end{equation}
\begin{equation}\label{Basicstrutequations2}
*\,(\dx\theta)^i=
(-1)^{\frac{n(n+1)}{2}}\frac{i!}{(n-i)!s}\,\theta\wedge(\dx\theta)^{n-i}  ,
\qquad\quad
*\alpha_i=\frac{(-1)^{n-i}}{s}\,\theta\wedge\alpha_{n-i}  .
\end{equation}
Moreover,
\begin{equation}
 \alpha_i\wedge\dx\theta=0 \qquad \mbox{and} \qquad \alpha_i\wedge\alpha_j=0,\ \ \forall j\neq
n-i.
\end{equation}
\end{prop}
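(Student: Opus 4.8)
The plan is to fix a (local) adapted orthonormal coframe $e^0,e^1,\ldots,e^n,e^{n+1},\ldots,e^{2n}$ on $\tsb$ and reduce every assertion to linear algebra with the Hodge star $*$ of the Sasaki metric and the orientation $\vol\wedge\alpha=e^{01\cdots(2n)}$. The decisive preliminary step is to rewrite the $\alpha_i$ in this coframe. Since $B$ is the parallel isometry $H\rr V$ with $Be_k=e_{n+k}$ for $k\geq1$ (equivalently $B^*e^{n+k}=e^k$), and since $\alpha=e^{(n+1)\cdots(2n)}$ is the product of the non-normal vertical $1$-forms, the insertion operator in \eqref{alpha_i}--\eqref{alpha_itravez} acts on the decomposable form $\alpha$ by replacing, in $i$ of the $n$ slots, a vertical factor $e^{n+k}$ by its horizontal partner $e^k$. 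The weight $n_i=\tfrac{1}{i!(n-i)!}$ is exactly the normalisation that collapses the antisymmetrised sum to one term per subset, so that
\[
\alpha_i=\sum_{\substack{S\subseteq\{1,\ldots,n\}\\ |S|=i}}\ \bigwedge_{k=1}^{n}\omega^S_k,
\qquad
\omega^S_k=\begin{cases}e^k,& k\in S,\\ e^{n+k},& k\notin S,\end{cases}
\]
the factors taken in increasing order of $k$. I would verify this by evaluating both sides on the adapted frame, cross-checking against \eqref{alfascomniguala2} and \eqref{alfascomniguala3}.

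With this description each $\alpha_i$ is a sum of monomials using exactly one of the two $1$-forms $e^k,e^{n+k}$ from each pair $k$, with $i$ horizontal choices. The last claim is then immediate: $\dx\theta=\sum_{k=1}^{n}e^{n+k}\wedge e^k$ by Proposition \ref{dmu}, so $\alpha_i\wedge\dx\theta$ repeats a factor in every monomial and vanishes; and in $\alpha_i\wedge\alpha_j$ a monomial survives only when the two factor-choices are complementary within each pair, forcing the horizontal index sets to be complementary, i.e. $j=n-i$.

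For the first two identities I would apply the Hodge star directly to these monomials, using that $*e^I=\sigma(I,I^c)\,e^{I^c}$ with $\sigma$ the sign sorting $(I,I^c)$ into the volume order. Then $*$ sends each monomial of $\alpha_i$ to $e^0$ wedged with the monomial of $\alpha_{n-i}$ obtained by swapping $e^k\leftrightarrow e^{n+k}$ in every pair; assembling the signs gives $*\alpha_i=\tfrac{(-1)^i}{s}\theta\wedge\alpha_{n-i}$. The relation $*\theta=s\,\alpha_n\wedge\alpha=s\,e^{1\cdots(2n)}$ is read off at once, while $(\dx\theta)^n=n!\prod_{k}(e^{n+k}\wedge e^k)=n!\,(-1)^{\frac{n(n+1)}{2}}e^{1\cdots(2n)}$ follows from expanding the $n$-th power (only the all-distinct term survives, the summands being commuting $2$-forms) and reordering; the analogous expansion of $(\dx\theta)^i$ paired against its Hodge star yields the first identity of \eqref{Basicstrutequations2}.

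The routine but error-prone part is the sign bookkeeping: pinning down $(-1)^{\frac{n(n+1)}{2}}$ from the perfect-shuffle reordering of $(n+1,1,n+2,2,\ldots,2n,n)$, and the factor $(-1)^i$ from sorting the complementary index sets. The main obstacle, however, is the preliminary coframe expansion of $\alpha_i$ and the justification that the weight $n_i$ produces precisely the subset sum above; once that is in place, every remaining assertion is a direct, if careful, computation.
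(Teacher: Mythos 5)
Your proposal is correct and is essentially the paper's own proof: the paper likewise fixes an adapted coframe, converts the definition \eqref{alpha_i} into a coframe expansion of $\alpha_i$ (via the identity \eqref{techniqueofdifferentialforms}, keeping the permutation sum where you collapse it to a sum over index subsets --- the two agree since the weight $n_i$ exactly cancels the $i!(n-i)!$ repetitions per subset), and then derives all four assertions by direct Hodge-star and wedge computations on the resulting monomials. Your sign bookkeeping matches the paper's (the shuffle sign $(-1)^{\frac{n(n+1)}{2}}$ in $(\dx\theta)^n$ and the factor $(-1)^i$ in $*\alpha_i=\tfrac{(-1)^{i}}{s}\,\theta\wedge\alpha_{n-i}$), so the difference is purely organizational.
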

The proof of all propositions and theorems in this sub-section is postponed to Section \ref{Pomf}. Next we use the notation $R_{lkij} = \langle R^\na(e_i,e_j)e_k,e_l\rangle$ with 
\begin{equation}
 R^\na(e_i,e_j)e_k=\na_{e_i}\na_{e_j}e_k-\na_{e_j}\na_{e_i}e_k-\na_{[e_i,e_j]}e_k .
\end{equation}
\begin{teo}[1st-order structure equations] \label{derivadasdasnforms}
We have
\begin{equation}\label{dalphai}
 \dx\alpha_i=\frac{1}{s^2}(i+1)\,\theta\wedge\alpha_{i+1}+\calR\alpha_i
\end{equation}
where
\begin{equation}\label{Ralphai}
 \calR\alpha_i=\sum_{0\leq j<q\leq n}\sum_{p=1}^nsR_{p0jq}\,e^{jq}\wedge
e_{p+n}\lrcorner\alpha_i .
\end{equation}
\end{teo}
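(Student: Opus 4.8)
The plan is to compute $\dx\alpha_i$ directly from the definition $\alpha_i=n_i\,\alpha\circ(B^i\wedge 1^{n-i})$ by exploiting the parallel structure available on $\tsb$. The key observation is that both the connecting endomorphism $B$ and the form $\alpha$ are built from $\na^*$-parallel data; the only sources of non-parallelism are the normalization $\xi/\|\xi\|$ (equivalently, the passage from $\na^*$ to the induced connection $\na^\bigstar$ on $\tsb$, whose correction term appears in \eqref{nablastar}) and the torsion $\calR$ from \eqref{torsaodenablaasterisco}. The strategy is therefore to write $\dx\alpha_i$ using the intrinsic metric connection $\na^\bigstar$ via the standard identity $\dx\beta=\sum_k e^k\wedge\na^\bigstar_{e_k}\beta$ applied to $p$-forms, plus a torsion correction accounting for the fact that $\na^\bigstar$ is not torsion-free.

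First I would establish the covariant-differentiation rules for the building blocks on $\tsb$. Since $B$ is $\na^*$-parallel and $\alpha$ is the interior product of the unit vertical field $\xi/\|\xi\|$ with a vertical pull-back of a parallel form, I would compute $\na^\bigstar\alpha$ and $\na^\bigstar B$ using \eqref{nablastar}: the correction $\frac{1}{s^2}\langle y^v,z^v\rangle\xi$ is what produces the horizontal-to-vertical ``rotation'' of the frame that turns $B^i$-factors into $B^{i-1}$-factors. This is precisely the mechanism that lowers the index from $i$ to $i-1$ and produces the $\theta\wedge\alpha_{i-1}$ term, with the combinatorial weight $(n-i+1)$ arising from the number of ways a $1$-factor among the $n-i$ identity slots can be hit by the $\na^*_\cdot\xi=\cdot^v$ rule of \eqref{pullbackconnectionproperties}. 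I would verify the factor $\frac{1}{s^2}(n-i+1)$ by a careful bookkeeping of these contractions against the symmetrization defining $\alpha_i$ in \eqref{alpha_itravez}, cross-checking against the known $n=1$ formulas in the introduction and the sectional-curvature formula \eqref{dalphaicomcsc}.

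Second, the torsion term: because $\na^\bigstar$ has torsion equal (up to the correction already absorbed) to $\calR(v,w)=\pi^*R^\na(v,w)\xi$, the exterior-derivative identity acquires a curvature contribution. I would show this contribution is exactly $\calR\alpha_i$ as written in \eqref{Ralphai}, by inserting $\calR(e_j,e_q)=\sum_p sR_{jq0p}\,e_{p+n}$ (the vertical expansion of the curvature applied to $\xi$ in the adapted frame, with the factor $s$ from $\|\xi\|=s$) and contracting into $\alpha_i$; the sum over $0\leq j<q\leq n$ ranges over the horizontal index pairs since $\calR$ depends only on horizontal directions, and $e_{p+n}\lrcorner\alpha_i$ records the vertical slot being filled. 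The main obstacle I anticipate is keeping the sign conventions and the symmetrization weights consistent throughout: the orientation choice $\vol\wedge\alpha$, the alternating-sum definition of $\alpha_i$, and the Hodge relations of Proposition \ref{Basicstrutequations} must all be tracked simultaneously so that the two terms combine with the correct signs and the overall expression is manifestly independent of the choice of adapted frame. I would handle this by fixing one adapted frame at a point (with $e_0=B^{\mathrm{ad}}\xi/\|\xi\|$), computing both terms there where many frame-derivative terms drop, and then arguing invariance from the tensorial nature of $\na^\bigstar$ and $\calR$.
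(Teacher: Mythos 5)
Your overall architecture is the same as the paper's: compute $\dx\alpha_i$ through a connection on $\tsb$ (the paper uses the torsion-free $D=\na^\bigstar-\frac{1}{2}\calR$ together with $\dx\alpha_i=\sum_j e^j\wedge D_j\alpha_i$, which is equivalent to your ``$\na^\bigstar$ plus a torsion correction''), and your handling of the curvature term is essentially the paper's computation: insert $\calR(e_j,e_q)=\sum_p sR_{jq0p}\,e_{p+n}$ and contract into $\alpha_i$, giving \eqref{Ralphai}. The genuine gap is in the flat term. You attribute $\frac{1}{s^2}(n-i+1)\,\theta\wedge\alpha_{i-1}$ to the correction term $\frac{1}{s^2}\langle y^v,z^v\rangle\xi$ of \eqref{nablastar}. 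That term contributes nothing: it is valued in the direction of $\xi$, and every $\alpha_i$ (and $\alpha$ itself) annihilates $\xi$, because $B\xi=0$ and $\xi\lrcorner\alpha=0$. The paper's proof uses precisely this vanishing to \emph{discard} the correction and reduce $D_v\alpha_i$ to $\na^*_v\alpha_i$ plus the curvature term. If you run your computation as described, the contribution you are counting on is identically zero, and the $\theta\wedge\alpha_{i-1}$ term never appears.

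The actual mechanism is the non-parallelism of $\alpha$ itself under $\na^*$: differentiating the factor $\xi/\|\xi\|$ gives $\na^*_v\alpha=\frac{1}{s}\,v^v\lrcorner\inv{\pi}\vol_M$, whose frame expansion carries a factor $\xi^\flat$; since $\xi^\flat\circ 1=0$ on $T\tsb=\xi^\perp$, this $\xi^\flat$ must be composed with one of the $i$ factors $B$, and it is the identity $\xi^\flat\circ B=\theta$ from \eqref{mu} that makes $\theta$ appear. So it is a $B$-slot that gets consumed (which is why the index drops from $i$ to $i-1$), with multiplicity $i$, and the coefficient is $n_i\,i/n_{i-1}=n-i+1$. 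Your proposed count over ``the $n-i$ identity slots'' is wrong on both scores: hitting a $1$-slot is the mechanism that \emph{raises} the index (it is exactly what the curvature term does in the constant-curvature case, $\calR\alpha_i=-k(i+1)\,\theta\wedge\alpha_{i+1}$, cf. \eqref{dalphaicsc}), and the number $n-i$ is not $n-i+1$, so the bookkeeping you describe cannot reproduce the stated coefficient. A secondary point: the Leibniz rule cannot be applied naively with $\na^\bigstar$ to $\alpha\circ(B^{i}\wedge1^{n-i})$, because $B$ does not preserve $T\tsb$ (e.g.\ $Be_0=\xi/s$); the paper sidesteps this by differentiating with $\na^*$ at the level of $TM\setminus 0$, where $\na^*B=0$ and the composition with $B^{i}\wedge1^{n-i}$ is taken before restriction.
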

We have in particular the formulas $\calR\alpha_0=0,\ \calR\alpha_{1}=-rs\,\vol=-r\,\theta\wedge\alpha_0$, where $r=\frac{1}{s^2}\inv{\pi}\ric(\xi,\xi)$ is a smooth function on $\tsb$ determined by the Ricci curvature of $M$.  In other words, $r$ is defined by
\begin{eqnarray}\label{defofr}
  u\in\tsb\:\longmapsto\: r(u)&=&  \frac{1}{s^2}\ric_{\pi(u)}(u,u)  \nonumber \\
     &=&\frac{1}{s^2}\tr{R^\na_{\pi(u)}(\,\cdot\,,u)u}=\sum_{j=1}^nR_{j0j0} . 
\end{eqnarray}
We therefore write
\begin{equation}\label{dalphazero}
 \dx\alpha_0=\frac{1}{s^2}\,\theta\wedge\alpha_{1}
\end{equation}
and
\begin{equation}\label{dalphaum}
 \dx\alpha_{1}=\frac{2}{s^2}\,\theta\wedge\alpha_{2}-r\,\theta\wedge\alpha_0 .
\end{equation}

Also we remark
\begin{eqnarray}\label{Ralphan}
 \dx\alpha_n&=&\calR\alpha_n   \nonumber \\
&=&\sum_{0\leq j<q\leq n}\sum_{p=1}^n(-1)^{p-1}sR_{p0jq}\,e^{jq}
\wedge e^{(n+1)\cdots\widehat{(n+p)}\cdots (2n)}   .
\end{eqnarray}
Since
\begin{equation}
 \dx(\calR\alpha_i)=\frac{1}{s^2}(i+1)\,\theta\wedge\calR\alpha_{i+1},
\end{equation}
we have
\begin{equation}
 \dx\theta\wedge\calR\alpha_i=0.
\end{equation}

We recall the decomposition of $n$-forms under the trivial plus twice the standard representation of the special orthogonal group ${\mathrm{SO}}(n)$ on $\R^{2n+1}=\R\oplus2\R^n$. This induces the decomposition $\Lambda^n(\R^{2n+1})= \oplus_{l+p+q=n}\Lambda^l(\R)\otimes\Lambda^p(\R^n)\otimes\Lambda^q(\R^n)$.
It has as invariants, giving the 1-dimensional representations, the forms $\theta^{\varepsilon}\wedge(\dx\theta)^{[\frac{n}{2}]}$, $\alpha_0,\ldots,\alpha_n$, where $\varepsilon=0$ or $1$ according to the parity of $n$.

We call the $\dx$-closed ideal generated by the set of differential forms
$\theta,\alpha_1,\ldots,\alpha_n$ the \textit{natural exterior differential
system on the tangent sphere bundle of $M$}. 

On the case of flat space $M=\R^{n+1}$ our system coincides essentially with that defined by Bryant, Griffiths and Grossman in \cite[p. 32]{BGG} for the purpose of the study of Euclidean submanifolds.

Case $n=1$ is due to Cartan, cf. \cite{SingerThorpe}, where $S_sM\lrr M$ is seen as the principal $\mathrm{SO}(2)$-bundle of norm $s$ orthogonal frames. The generalisation is thus overall in the degree of the forms, which grows linearly with $n$. The dimension of $S_sM$ grows linearly too, whereas that of any Cartan's principal frame bundle, with the celebrated Lie algebra valued connection 1-forms, has quadratic growth with $n$.

To the best of our knowledge, there exist in the literature only a few descriptions of the exterior differential system of $\theta$ and the $\alpha_i$. In each, we find a distant or partial relation with our differential system, on some fibre bundle over a given manifold. The closest system, in \cite{BGG}, has in view the solution of some geometrical problems in Euclidean space. We also find this kind of structural $n$-forms in \cite{Via}, defined over Cartan's principal conformal frame bundle, with a large symmetry group. However, the structural equations turn out quite apart from the present, even for the case they were designed, i.e., a conformally flat base manifold.

With the remarkable applications of differential systems within mechanical and geometrical motivations seen in \cite{Gri1}, we are presented with local solutions which arguably recur to the 1- and 2-forms, respectively, of 2 and 3 dimensional base $M$. In the landmark reference \cite[p.152]{BCGGG} the emphasis remains in 3-dimensional questions, and the same is true for \cite{Gri2} or \cite{IveyLan}.

\subsection{Some natural geometric properties}
\label{subsection_Somefunctorialproperties}

We continue with $M$ of dimension $n+1$ and $S_sM$ with any radius $s>0$. In this section we leave some computational proofs to Section \ref{Pomf}.

It is trivial to see  $\delta\theta=0$. As well as the following result.
\begin{prop}\label{alphanealphanmenos1arecoclosed}
The differential forms $\alpha_n$ and $\alpha_{n-1}$ are always coclosed.
\end{prop}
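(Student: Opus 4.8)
The plan is to pass to the Hodge dual and invoke the first-order structure equations. Since on $\tsb$ we have $**=1_{\Lambda^*_{\tsb}}$, the codifferential satisfies $\delta=\pm *\dx*$, so a form $\beta$ is coclosed precisely when $\dx*\beta=0$. It therefore suffices to compute $\dx*\alpha_0$ and $\dx*\alpha_1$ and show both vanish. The decisive input is the duality $*\alpha_i=\frac{(-1)^i}{s}\,\theta\wedge\alpha_{n-i}$ from Proposition \ref{Basicstrutequations}, which turns the claim into a one-line differentiation involving only $\dx\theta$, $\dx\alpha_n$ and $\dx\alpha_{n-1}$, all of which are already known.

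For $\alpha_0$ I would start from $*\alpha_0=\frac1s\,\theta\wedge\alpha_n$ and expand $\dx*\alpha_0=\frac1s(\dx\theta\wedge\alpha_n-\theta\wedge\dx\alpha_n)$. The first term vanishes because $\alpha_i\wedge\dx\theta=0$ for every $i$ (last line of Proposition \ref{Basicstrutequations}, commuting up the even-degree factor $\dx\theta$). The second vanishes because $\dx\alpha_n=\frac1{s^2}\theta\wedge\alpha_{n-1}$ by \eqref{dalphazero}, so that $\theta\wedge\dx\alpha_n$ carries the repeated factor $\theta\wedge\theta=0$. Hence $\dx*\alpha_0=0$ and $\alpha_0$ is coclosed.

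For $\alpha_1$ the shape of the computation is identical, but the Ricci term now enters. From $*\alpha_1=-\frac1s\,\theta\wedge\alpha_{n-1}$ one gets $\dx*\alpha_1=-\frac1s(\dx\theta\wedge\alpha_{n-1}-\theta\wedge\dx\alpha_{n-1})$. Again $\dx\theta\wedge\alpha_{n-1}=0$, and substituting \eqref{dalphaum}, namely $\dx\alpha_{n-1}=\frac2{s^2}\theta\wedge\alpha_{n-2}-\frac rs\,\vol$, leaves $\theta\wedge\dx\alpha_{n-1}=-\frac rs\,\theta\wedge\vol$ after discarding $\theta\wedge\theta\wedge\alpha_{n-2}=0$. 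The key point is that this surviving curvature term also dies: since $\theta=s\,e^0$ and $\vol=e^0\wedge\cdots\wedge e^n$, the product $\theta\wedge\vol$ contains the repeated factor $e^0\wedge e^0$ and so vanishes. Consequently $\dx*\alpha_1=0$ and $\alpha_1$ is coclosed.

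I expect no genuine obstacle beyond sign bookkeeping; the single point worth emphasising is the final cancellation, where the Ricci contribution to $\dx\alpha_{n-1}$ is annihilated by the extra $\theta$ supplied by $*\alpha_1$. This is exactly why $\alpha_1$ is coclosed for \emph{every} metric rather than only in special cases, and I would note that the same mechanism no longer forces cancellation for $\alpha_2$, which is what isolates the Einstein condition announced in the abstract.
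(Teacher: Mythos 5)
Your proof is correct and follows essentially the same route as the paper: dualize via $*\alpha_i=\frac{(-1)^i}{s}\,\theta\wedge\alpha_{n-i}$ and differentiate using $\alpha_i\wedge\dx\theta=0$ together with \eqref{dalphazero} and \eqref{dalphaum}, the surviving Ricci term being killed by $\theta\wedge\vol=0$ exactly as you emphasise. The only cosmetic difference is in the $\alpha_0$ case, where the paper simply observes that $*\alpha_0$ is proportional to $\vol=\pi^*\vol_M$, hence closed as the pullback of a closed form, rather than expanding $\dx(\theta\wedge\alpha_n)$ by the structure equations.
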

\begin{proof}
Indeed, we have that
$\dx*\alpha_{n-1}=\frac{-1}{s}\,\dx(\theta\wedge\alpha_{1})=
\frac{-1}{s}\,\dx\theta\wedge\alpha_{1}+\frac{1}{s}\,
\theta\wedge\dx\alpha_{1}=0$ and
$\dx*\alpha_n=\dx\,\vol=\pi^*\dx\,\vol_M=0$.
\end{proof}
Uniqueness of the Levi-Civita connection implies that any isometry preserves the horizontal distributions. Hence the following trivial result.
\begin{prop}
 Let $f:M\lrr N$ be an isometry between oriented Riemannian manifolds. Then the map $\hat{f}:S_sM\lrr S_sN$ induced by the differential of $f$ is an isometry and a contactomorphism which applies by pullback each $\alpha_i$ of $N$ to the respective $\alpha_i$ of $M$.
\end{prop}
Let us see how the differential system behaves under restriction to totally geodesic hypersurfaces.
\begin{teo}\label{teorema_restrictiontoTGeodhypers}
Let $f:N\lrr M$ be an $n$-dimensional orientable embedded totally geodesic Riemannian hypersurface. Let $\hat{f}$ denote the induced embedding $S_sN\lrr S_sM$ and let $\vec{n}$ denote the unit normal vector-field of $N$ in $M$, inducing through $\vec{n}\lrcorner\vol_M$ the orientation of $N$. Then the differential system on $S_sN$ arises in both ways through the following formulas, for every $i=0,\ldots,n-1$:
\begin{equation}\label{restrictiontoTGeodhypers_formula}
 \alpha^N_i=-{\hat{f}}^*(\vec{n}^v\lrcorner\alpha_{i+1})=
-{\hat{f}}^*(\vec{n}^h\lrcorner\alpha_{i})  .
\end{equation}
\end{teo}
\begin{proof}
 Since the Levi-Civita connection of $N$ is the same as that of $M$, the horizontal distribution of $N$ is mapped by $\hat{f}_*$ into that of $M$. Hence the mirror map on $TTN$ is obtained by simply restricting $B$ on both its source and target spaces. Clearly $\xi$ restricts always to the tautological vector field of $N$. Since we know the induced volume-form on $N$ it follows that $\alpha^N_{n-1}=-{\hat{f}}^*(\vec{n}^v\lrcorner\alpha_n)$. Since $B\vec{n}^v=0$ and $\vec{n}^h\lrcorner\alpha_n=0$, the proof is just a few computations away, though long, which we postpone to the section entirely concerned with them.
\end{proof}

Another differential system worth studying is the one induced on $\partial S_sM$, which contains $S_s\partial M$, for manifolds with boundary.

Now we define a new 1-form $\rho$ to be $\frac{1}{s}\xi\lrcorner\inv{\pi}\ric$ restricted to $\tsb$. We are referring to the vertical lift of the Ricci tensor, then pulled-back to $\tsb$. In other words, using an adapted frame,
\begin{equation}\label{defofrho}
 \rho=\sum_{a,b=1}^nR_{a0ab}\,e^{b+n} .
\end{equation}
Recall $\alpha_n,\alpha_{n-1}$ are always coclosed (Proposition \ref{alphanealphanmenos1arecoclosed}). The next theorem gives one step further; it
transforms the equation for an Einstein metric into a formally degree 1 equation.
\begin{teo}\label{metricaEinsteinealpha2}
We have
\begin{equation}\label{dasteriscoalphadois}
\dx*\alpha_{n-2}\:=\:s\,\rho\wedge\vol .
\end{equation}
The metric on $M$ is Einstein if and only if $\delta\alpha_{n-2}=0$.
\end{teo}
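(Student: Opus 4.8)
The plan is to compute $\dx*\alpha_2$ directly and then read off the Einstein condition; since the Hodge star is an isomorphism with $**=1$, the relation $\delta\alpha_2=\pm*\dx*\alpha_2$ shows $\delta\alpha_2=0$ if and only if $\dx*\alpha_2=0$, so the second assertion of the theorem follows at once from the first formula together with an analysis of when $\rho\wedge\vol$ vanishes. Observe that $\alpha_2$ (and $\alpha_{n-2}$) only exist for $n\geq2$, i.e. $\dim M\geq3$, which is precisely the range in which the concluding Schur-type argument applies.

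First I would use the second identity in \eqref{Basicstrutequations2} to write $*\alpha_2=\frac{1}{s}\theta\wedge\alpha_{n-2}$ and differentiate:
\begin{equation*}
 \dx*\alpha_2=\frac{1}{s}\big(\dx\theta\wedge\alpha_{n-2}-\theta\wedge\dx\alpha_{n-2}\big).
\end{equation*}
By Proposition \ref{Basicstrutequations} one has $\alpha_{n-2}\wedge\dx\theta=0$, which kills the first term. Inserting the structure equation \eqref{dalphai} for $\dx\alpha_{n-2}$ and using $\theta\wedge\theta=0$ to discard the $\theta\wedge\alpha_{n-3}$ contribution, I arrive at the clean intermediate identity $\dx*\alpha_2=-\frac{1}{s}\,\theta\wedge\calR\alpha_{n-2}$. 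The whole problem is thereby reduced to evaluating $\theta\wedge\calR\alpha_{n-2}$ and matching it with $-s\,\rho\wedge\vol$.

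Next I would expand $\calR\alpha_{n-2}$ from \eqref{Ralphai} against the explicit frame description of the Griffiths forms: each summand of $\alpha_{n-2}$ has the form, up to an orientation sign, $e^S\wedge e^{S^c+n}$ with $S\subset\{1,\dots,n\}$, $|S|=n-2$, and vertical complement $S^c=\{a,b\}$. Wedging with $\theta=s\,e^0$ forces $j\neq0$, so only indices $1\leq j<q\leq n$ survive in \eqref{Ralphai}; moreover $e^0\wedge e^{jq}\wedge(e_{p+n}\lrcorner\alpha_{n-2})$ is nonzero only when $\{j,q\}$ equals the vertical pair $S^c=\{a,b\}$ and $p\in\{a,b\}$. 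Each summand then reduces, up to sign, to $e^{b+n}\wedge\vol$ with coefficient $R_{ab0a}$ together with its $a\leftrightarrow b$ partner $e^{a+n}\wedge\vol$ with coefficient $R_{ab0b}$. Summing over all summands collapses the double curvature sum to $\sum_{a,b=1}^n R_{ab0a}\,e^{b+n}\wedge\vol$, the diagonal terms vanishing by antisymmetry; recognising $\sum_a R_{ab0a}=\ric(e_b,e_0)$ and comparing with \eqref{defofrho} yields $\theta\wedge\calR\alpha_{n-2}=-s\,\rho\wedge\vol$, hence \eqref{dasteriscoalphadois}. This is the exact parallel, one step down, of $\calR\alpha_{n-1}=-\frac{r}{s}\vol$ recorded after Theorem \ref{derivadasdasnforms}: there one meets the trace $r=\ric(\xi,\xi)$, here the off-diagonal part of the Ricci tensor.

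Finally, from $\dx*\alpha_2=\rho\wedge\vol$ and the reduction above, $\delta\alpha_2=0$ is equivalent to $\rho\wedge\vol\equiv0$ on $\tsb$. Since $\rho=\sum_b s\,\ric(e_b,e_0)\,e^{b+n}$ is purely vertical while $\vol=e^{0\cdots n}$ is horizontal, the product vanishes at $u\in\tsb$ precisely when $\ric(e_b,e_0)=0$ for every $e_b\perp e_0$. Requiring this at every point and for every unit direction $e_0$ says exactly that $\ric_p$ is proportional to $g_p$ at each $p$; for $\dim M\geq3$ Schur's lemma upgrades the pointwise factor to a constant, so $M$ is Einstein, and conversely $\ric=\lambda g$ makes $\rho$ vanish identically. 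I expect the only genuinely delicate point to be the sign bookkeeping in the frame expansion of the third step --- tracking the orientation signs in the summands of $\alpha_{n-2}$, the interior-product signs, and the reordering of $e^0\wedge e^{ab}\wedge e^S$ into $\vol$ --- but these are routine and, as in the analogous gwistor computation of \cite{Alb2}, they align.
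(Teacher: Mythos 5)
Your proposal is correct and follows essentially the same route as the paper's own proof: reduce via $*\alpha_2=\frac{1}{s}\theta\wedge\alpha_{n-2}$, the identities $\dx\theta\wedge\alpha_{n-2}=0$ and $\theta\wedge\theta=0$ to the identity $\dx*\alpha_2=-\frac{1}{s}\theta\wedge\calR\alpha_{n-2}$, then collapse the curvature sum in an adapted frame to $\rho\wedge\vol$ and read off the Einstein condition from the vanishing of the vertical 1-form $\rho$. Your bookkeeping by vertical index pairs $\{a,b\}$ is only a cosmetic reorganization of the paper's permutation computation, and your explicit appeal to Schur's lemma for $\dim M\geq3$ makes precise what the paper leaves as ``observing how $\rho$ is defined.''
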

\begin{proof}
We leave the deduction of \eqref{dasteriscoalphadois} to the computations section, below. The conclusion on the metric being Einstein is found by observing how
$\rho$ is defined. Indeed, $\delta\alpha_{n-2}=0$ if and only if at each point $u\in\tsb$ the form $\xi\lrcorner\inv{\pi}\ric=\lambda\xi^\flat$ for some real function $\lambda$ of $u$. This says $\ric(u,u)=\lambda\langle u,u\rangle$, which is the Einstein metric condition.
\end{proof}

\subsection{Examples and applications to special metrics}
\label{Atte}

Let us see some examples of the fundamental equations associated to the tangent sphere bundle of a given oriented Riemannian manifold $M$ of dimension $n+1$.

\vspace{3mm}

\noindent\textsc{Example 1}. From the definitions, in case $n=1$ we have a global coframe
\begin{equation}
  \theta=s\,e^0\ ,\qquad\ \alpha_0=e^1\ ,\qquad\ \alpha_1=e^2 .
\end{equation}
Now $\tsb$ coincides with the total space of a principal $\mathrm{SO}(2)$-bundle and is hence a parallelisable manifold for every surface --- as it is well-known. The exterior differential system agrees with the Cartan structural equations, cf. \cite{Alb2015a,SingerThorpe}:
\begin{equation}\label{dalphais_m=2}
  \dx\theta=\alpha_1\wedge\alpha_0\ ,\qquad \dx\alpha_1=c\,\alpha_0\wedge\theta\ ,
\qquad \dx\alpha_0=\frac{1}{s^2}\,\theta\wedge\alpha_1 
\end{equation}
where $c=R_{1010}$ is the Gaussian curvature of $M$. It is quite interesting to recognise that $c$ is non-constant in general, although a fibre constant.

Let us see the above equations in the trivial case of $S_s\R^2=\R^2\times S^1_s$.
Admitting coordinate functions $(x^1,x^2,u_1,u_2)$, subject to $(u_1)^2+(u_2)^2=s^2$,
we immediately find $s\,e^0=u_1\dx x^1+u_2\dx x^2,\,\ s\,e^1=-u_2\dx x^1+u_1\dx
x^2,\,\ s\,e^2=-u_2\dx u_1+u_1\dx u_2$. Using the identity $u_1\dx u_1+u_2\dx u_2=0$, we deduce straightforwardly the equations in \eqref{dalphais_m=2}. Notice a trivial proof for non-flat coordinates is not quite at hand.

\vspace{3mm}

\noindent\textsc{Example 2}. Suppose $n=2$ and $s=1$. Then
\begin{equation}\label{alfascomniguala2-1}
  \alpha_0=e^{12},\qquad\alpha_1=e^{14}+e^{32},\qquad\alpha_2=e^{34} .
\end{equation}
The equations for $\alpha_i,\ i=0,1,2$, give us the global tensors $\calR\alpha_0=0,\ \,\calR\alpha_1=-r\,\vol$,
\,cf. \eqref{defofr}, and the very interesting new global tensor $\calR\alpha_2=$
\begin{equation}\label{Ralpha0}
R_{1001}e^{014}+R_{1002}e^{024}+R_{1012}e^{124}-
 R_{2001}e^{013}-R_{2002}e^{023}-R_{2012}e^{123}  .
\end{equation}
Henceforth
\begin{equation}
\begin{split}
 -\frac{1}{2}\dx\theta\wedge\dx\theta=\alpha_0\wedge\alpha_2=-\frac{1}{2}\alpha_1\wedge\alpha_1 , \qquad \qquad\\
  \dx\alpha_2=\calR\alpha_2\ ,\qquad \dx\alpha_1=2\theta\wedge\alpha_2 -r\,\vol ,
\qquad \dx\alpha_0=\theta\wedge\alpha_1 .
\end{split}
\end{equation}
Presently illustrative, these tensors play an important role in the case of 3-ma\-ni\-folds. We initiate the study in \cite{Alb2015a}. One may also discuss the appearance in \cite[p. 461]{Gri2} of some relation with these equations, locally, arising from the principal frame bundle of $M$ and having in view an example of a hyperbolic differential system.

\vspace{3mm}

\noindent\textsc{Example 3}. It is quite interesting to consider the case of constant sectional curvature $c$ in any dimension $n+1$. Since the Riemann curvature tensor is
$R_{lkqp}=c(\delta_{lq}\delta_{kp}-\delta_{lp}\delta_{kq})$, we prove in Section
\ref{Pomf} that $\calR\alpha_i=-c(n-i+1)\,\theta\wedge\alpha_{i-1}$. Recalling $\alpha_{-1}=\alpha_{n+1}=0$, we thus have
\begin{equation}\label{dalphaicsc}
  \dx\alpha_i=\theta\wedge\bigl(\frac{1}{s^2}(i+1)\,\alpha_{i+1}-c(n-i+1)\,\alpha_{i-1}\bigr) . 
\end{equation}
Notice $\calR\alpha_{1}=-snc\,\vol$, just as expected through \eqref{dalphaum}. Despite
the awkward context, we may formally compare the deduced formula with the
Frenet equations of a curve in $\R^n$ described in \cite[p. 23]{Gri2}. Furthermore, if $M$ has constant sectional curvature, it is easy to see that  $\dx*\alpha_i=0, \,\forall 0\leq i\leq n$. 

\vspace{3mm}

With little effort we deduce the converse of the last implication. Also, since \eqref{dalphaicsc} is obtained independently of $c$ varying or not, the differential system yields a new proof of the Theorem of Schur, by purely skew-symmetric tensor methods.
\begin{prop}\label{dasteriscoalphaicsc}
Suppose $\dim M\geq 3$ and $M$ is connected.\\
(i) The metric is of fibrewise constant sectional curvature if and only if $\dx\alpha_n=-c\,\theta\wedge\alpha_{n-1}$. Also, if and only if $\delta\alpha_i=0,\ \forall i$.\\
(ii) (\emph{Schur Theorem}) Any of the hypotheses in (i) imply $M$ has constant sectional curvature.
\end{prop}
\begin{proof}
 (i) From \eqref{Ralphan} we see that $\calR\alpha_n=-c\,\theta\wedge\alpha_{n-1}$ implies $R_{p00q}=0$, for all $p\neq q>0$. This implies $R_{lkqp}$ of the \textit{constant} type above. Then, using the basic structure equations \eqref{Basicstrutequations2},  equation \eqref{dalphaicsc} easily implies the whole differential system is coclosed. So we are left to prove that $\delta\alpha_i=0$ implies $R_{lkqp}$ of the above fibrewise constant form. Indeed, we immediately find $\dx*\alpha_i=\frac{(-1)^{n-i}}{s}\theta\wedge\calR\alpha_{n-i}$, which vanishes if and only if $R_{p0jq}=0$ for all $p,j,q>0$.\\
 (ii) Let us suppose the algebraic curvature tensor is of the constant type, thus with fibrewise constant $c$. Then $c$ is the pullback to $\tsb$ of a function on $M$. Differentiating \eqref{dalphaicsc} again, we find in particular $\dx c\wedge\theta\wedge\alpha_{n-1}=0$. Writing $\dx c=\sum_{i=0}^n\dx c(e_i)e^i$, with an adapted frame, it then follows that $\dx c(e_i)=0,\ \forall i=1,\ldots,n$. Then $\dx c=a\,\theta$ is some multiple of $\theta$. But $0=\dx a\wedge\theta+a\,\dx\theta$ shows $a=0$.
\end{proof}

Now let us see an application which was found before the present construction of
the natural exterior differential system of a Riemannian manifold. We shall need to refer to concepts of ${\mathrm{G}}_2$ geometry which the reader may follow e.g. in \cite{Joy}.

In \cite{AlbSal1} it is proved that the total space of the unit
tangent sphere bundle $S_1M\rr M$ of any given oriented Riemannian 4-manifold $M$
carries a natural ${\mathrm{G}}_2$-structure. The structure led us to introduce the
3-forms $\alpha_0,\alpha_1,\alpha_2,\alpha_3$,
\begin{equation}
\alpha_0=e^{123},\quad
\alpha_1=e^{126}+e^{234}+e^{315},\quad\alpha_2=e^{156}+e^{264}+e^{345}, \quad\alpha_3=e^{456} ,
\end{equation}
and to deduce the respective structural differential equations. The space is now called \textit{${\mathrm{G}}_2$-twistor} or \textit{gwistor space}, due to similar techniques of twistor space, and its fundamental structure 3-form is proved to be the case $t=0$ for the following variation of $\mathrm{G}_2$-structures on $S_1M$ keeping the Sasaki metric, cf. \cite{Alb2013b}:
\begin{equation}\label{gwistor}
 \phi_t=\theta\wedge\dx\theta\pm\sin t\,(\alpha_0-\alpha_2)+\cos t\,(\alpha_1-\alpha_3) .
\end{equation}
This incursion within exceptional Riemannian geometry is rather fortunate with
the Hodge dual of $\phi$. The structure 3-form $\phi_0$ is coclosed (a relevant
${\mathrm{G}}_2$ condition since it originates a calibration) if and only if the 4-manifold $M$ is Einstein.

Another important question is then to find the conditions for a linear combination
$\varphi=\sum_{i=0}^n b_i\alpha_i+c\theta^{\varepsilon}\wedge
(\dx\theta)^{[\frac{n}{2}]}$, with $b_i,c\in\cinf{{S_1M}}$, to
be a calibration of degree $n$. Recall a calibration is a closed
$p$-form $\varphi$ such that $\varphi_{|V}\leq\vol_V$ for every oriented tangent
$p$-plane $V$. 

For even $n$ we certainly have an obvious $\varphi$. For $n=1$ the question may be solved easily recurring to \eqref{dalphais_m=2}. For $n=2$ and $3$ we have a complete classification in \cite[Theorems 4.3.2 and 4.3.4]{Joy} of all the possible calibrations which may occur point-wise, in the algebraic form, as elements of $\Lambda^n(\R^{2n+1})$. Several cases do follow as the $\varphi$ we have referred. For $n=3$, we have the ${\mathrm{G}}_2$ structure of gwistor space $\phi$ and $*\phi$, and their variations. The problem, still open, aims towards the study of the associative and coassociative submanifolds of $S_1M$.

\section{The Euler-Lagrange differential systems}
\label{eaa1}

\subsection{Recalling the theory}
\label{TELs}

We wish to study the Euler-Lagrange system $(\tsb,\theta,\dx\theta,\alpha_l)$ for some fixed $\alpha_l$, the Lagrangian form,
in the context of Riemannian geometry. The theory is that of exterior differential
systems of contact manifolds --- explained very clearly in \cite{BGG} as well as in various parts of \cite{Gri2}; we recall here some of its fundamentals.

In this section we assume $(S,\theta)$ is any given contact manifold, not necessarily metric, of dimension $2n+1$.

The \textit{contact differential ideal} $\calI$ is defined as the $\dx$-closed ideal
generated by $\theta\in\Omega^1_S$. In other words, $\calI$ is the ideal
algebraically generated by exterior multiples of $\theta$ and $\dx\theta$. A
\textit{Legendre submanifold} of $S$ consists of an $n$-dimensional manifold $N$
together with an
immersion $f:N\rr S$ such that $f^*\theta=0$. The same is to say $N$ is an 
integral $n$-submanifold, the expression \textit{integral} meaning 
$f^*\calI=0$. We also recall that there exists a generalisation of the
famous Darboux Theorem, which concludes that
certain generic Legendre submanifolds appear as the zero locus of $n+1$
of the so-called Pfaff coordinates. A Legendre submanifold is said to be $\mathrm{C}^1$-\textit{differentiable close} to such a generic Legendre submanifold $N$ if it appears as the graph of a
function on $N$ in the remaining Pfaff coordinates. These are then called the
\textit{transverse} Legendre
submanifolds. The local model is the 1-jet manifold $S=J^1(\R^n)$ of 
Euclidean flat space, with coordinates $(z,x^i,p_i)$, the contact form $\theta=\dx
z-\sum^n_{i=1}p_i\,\dx x^i$ and submanifold $N$ given by $z=0,p_i=0$. Any other $N$
close to that one being of the form $\{(f(x),x^i,\partial_if)\}$. Equivalently, a
Legendre manifold $N$ is considered $\mathrm{C}^1$-close or transverse if $(\dx
x^1\wedge\cdots\wedge\dx x^n)_{|N}\neq0$.

A \textit{Lagrangian} defined over $S$ is just a $\Lambda\in\Omega^n$. It gives
rise to a functional on the set of smooth, compact Legendre submanifolds $N\subset
S$, possibly with boundary, defined by (the restriction or
pullback to $N$ of the Lagrangian is omitted, as the emphasis is on the global and independent $\Lambda$):
\begin{equation}\label{Lagrangianfunctional}
 \calF_\Lambda(N)=\int_N\Lambda .
\end{equation}
There are two types of equivalence in a unique relation associated to such
$n$-forms. An equivalence class $[\Lambda]$ is represented by any element of
$\Lambda+\calI^n+\dx\Omega^{n-1}$, where $\calI^n=\calI\cap\Omega^n$. Such Lagrangian
class clearly induces the same functional on Legendre submanifolds without boundary.
On the other hand, a point-wise algebraic identity carries over to the whole contact manifold, giving:
\begin{equation}\label{igualdade1}
 \calI^k=\Omega^k\,,\quad\forall k>n .
\end{equation}
Hence we have that $\dx\Lambda\in\calI^{n+1}$ and so the above class is well
defined in the co\-ho\-mo\-lo\-gy ring of degree $n$ for the differential complex
$(\Omega^n/\calI^n,\dx)$, i.e. the
\textit{characteristic cohomology} ring $\bar{H}^n(S)$ of the exterior differential
system $(S,\theta)$. It relates to de\,Rham cohomology via the short exact sequence
$0\rr\calI^k\rr\Omega^k\rr\Omega^k/\calI^k\rr0$.

Having chosen a Lagrangian $\Lambda$, we wish to study the functional $\calF_\Lambda$. By \eqref{igualdade1} there exist two forms $\alpha,\beta$ on $S$ such that 
\[   \dx\Lambda=\theta\wedge\alpha+\dx\theta\wedge\beta=
\theta\wedge(\alpha+\dx\beta)+\dx(\theta\wedge\beta )   , \]
By \cite[Theorem 1.1]{BGG} there exists a unique global exact form $\Pi$ such that
$\Pi\wedge\theta=0$ and $\Pi\equiv\dx\Lambda$ in ${H}^{n+1}(\calI)$:
\begin{equation}\label{PoncareCartanform}
\Pi=\dx(\Lambda-\theta\wedge\beta)=\theta\wedge(\alpha+\dx\beta)   .
\end{equation}
The $n+1$-form $\Pi$ is called the \textit{Poincar\'e-Cartan} form.

Now suppose we have a variation of Legendre submanifolds with fixed boundary, i.e.
suppose there exists a smooth map $F:N\times[0,1]\rr S$ such that each $F_{|{N_t}}$,
with $N_t={N\times\{t\}}$, defines a Legendre submanifold (in other words,
$F^*\theta\equiv0\mod\dx t$) and $\partial (F(N_t))$ is independent of $t$. The variation
is of course around the Legendre submanifold $F_{|{N_0}}=f:N\rr S$. Then, subtracting
$\int_{N_t}\theta\wedge\beta=0$ from the left hand side, applying the well-known
formula of the usual derivative becoming the Lie derivative under the integral and
using the Cartan formula, it is proved that 
\begin{equation}
 \frac{\dx}{\dx t}\int_{N_t}\Lambda= 
\int_{N_t}\frac{\partial}{\partial t}\lrcorner\Pi .
\end{equation}
Recurring to usual variational calculus notation, for every varying direction vector field
$v\in\Gamma_0(N;f^*TS)$ vanishing along $\partial N$, at point $t=0$ playing the role
of $\frac{\partial}{\partial t}$, the previous identity reads
\begin{equation}
 \begin{split}
  {\boldsymbol{\delta}}\calF_\Lambda(N)(v) \ &=\ \int_Nv\lrcorner f^*\Pi \\
 &=\ \int_N(v\lrcorner f^*\theta)\,f^*\Psi . 
 \end{split}                     \label{formulavariacionalcomintegral1}
\end{equation}
The last equality follows from the existence, as we saw above, of a non-unique $n$-form
$\Psi$ such that $\Pi=\theta\wedge\Psi$. The conclusion is that
\begin{equation}\label{stationarysubmanifold}
 \left.\frac{\dx}{\dx t}\right\vert_0\calF_\Lambda(N_t)=0\qquad\mbox{if and only if}\qquad f^*\Psi=0  .
\end{equation}
A Legendre submanifold satisfying \eqref{stationarysubmanifold} is called a
\textit{stationary} Legendre submanifold. The exterior differential system algebraically
generated by $\theta,\dx\theta,\Psi$ is called the \textit{Euler-Lagrange system} of
$(S,\theta,\Lambda)$; its Poincar\'e-Cartan form $\Pi$ is said to be non-degenerate if
it has no other degree 1 factors besides the multiples of $\theta$.

In sum, the guiding rule to determine the critical submanifolds of
\eqref{Lagrangianfunctional} is the computation of the Poincar\'e-Cartan form
\eqref{PoncareCartanform}, its transformation into the product $\theta\wedge\Psi$ and
finally, due to \eqref{stationarysubmanifold}, the analysis of condition
$f^*\Psi=0$, that is, the Euler-Lagrange equation.

\subsection{Euler-Lagrange systems on $S_1M$ and applications}
\label{ELsottsb}

Again we consider an oriented $n+1$-dimensional Riemannian manifold
$M$ together with its unit tangent sphere bundle $\tsbu\stackrel{\pi}\lrr M$,
endowed with the canonical Sasaki metric and metric connection, possibly with torsion,
$\na^\divideontimes=\na^*-\frac{1}{s^2}\langle\na^*\ \,,\xi\rangle\xi$ induced from the Levi-Civita connection on $M$.

For the rest of this section we assume the notation $f:N\rr M$ refers to a compact
isometric immersed oriented submanifold of $M$ of dimension $n$.


There exists a smooth lift $\hat{f}:N\rr\tsbu$ of $f$. We simply
define $\hat{f}(x)=\vec{n}_{f(x)}$, the unique unit normal in $T_{f(x)}M$ chosen
according to the orientations of $N$ and $M$. Note that $\hat{f}$ is also defined on
$\partial N$. It is easy to see that, up to the vector bundle isometry
$\dx\pi_{|H}:H\rr\pi^*TM$ on the horizontal side, we have the decomposition into
horizontal plus vertical:
\begin{equation}\label{derivdecomp}
 \dx \hat{f}(w)=\dx f(w)+(f^*\na)_wf^*\vec{n}\ ,\ \,\forall
w\in T_xN .
\end{equation}
Indeed, due to \eqref{pullbackconnectionproperties}, the vertical part at each point
$x\in N$ is $\na^*_{\dx{\hat{f}}(w)}\xi=({\hat{f}}^*\na^*)_w{\hat{f}}^*\xi$, where $\xi$ is the canonical vertical vector field on $\tsbu$. Clearly,
$({\hat{f}}^*\xi)_x=\hat{f}(x)=\vec{n}_{f(x)}=(f^*\vec{n})_x$.

By definition of $\hat{f}$ and \eqref{derivdecomp} we have that $\hat{f}:N\rr\tsbu$ defines a
Legendre submanifold of the natural contact structure: $\hat{f}^*\theta=0$. In other
words, $\hat{f}(N)$ is an integral submanifold of $\theta$ and $\dx\theta$. If we
choose an adapted direct orthonormal coframe $e^0,e^1,\ldots,e^{2n}$ locally on
$\tsbu$, then it may not be tangent to $N\hookrightarrow\tsbu$. Yet we have also a direct
orthonormal coframe $e^1,\ldots,e^{n}$ for $N$ (we use the same letters for the
pullback). Now, from \eqref{derivdecomp}, for any $1\leq j\leq n$ we have
\begin{equation}\label{pulbacksdeadaptedcoframeum}
 {\hat{f}}^*e^{j}=e^j
\end{equation}
and
\begin{equation}\label{pulbacksdeadaptedcoframedois}
 {\hat{f}}^*e^{j+n}=-\sum_{k=1}^nA^j_{k}e^k
\end{equation}
with $A$ the second fundamental form of $N$. We recall, $A=-\na\vec{n}:TN\rr TN$.

(By Proposition \ref{dmu} and ${\hat{f}}^*\dx\theta=\dx{\hat{f}}^*\theta=0$  we discover, or rather confirm, that $A^j_{k}$ is a symmetric tensor.)

Conversely, a smooth Legendre submanifold $Y$ is locally the lift $N\rr
Y\subset\tsbu$ of an oriented smooth $n$-submanifold $N\hookrightarrow M$ if and
only if ${e^{1\cdots n}}_{|Y}\neq0$, for this guarantees that $Y$ is transverse. We are thus going to assume throughout such \textit{open} condition on submanifolds, defined by the ground differential form, ${\alpha_0}_{|Y}\neq0$.

Let us now recall a well-known result on minimal surfaces. Our proof is essentially
the generalisation of the flat Euclidean case seen in \cite{BGG} (using the
Grassmannian bundle of $n$-planes for the supporting fibre bundle over $M$). We show
the proof works with any Riemannian manifold $M$. Moreover, one easily writes the
formula for the first-variation of the volume with fixed boundary, cf. \cite{Xin}.
\begin{teo}[Classical Theorem]
Let $N$ be a compact isometrically immersed hypersurface in $M$ and let $H$ be the
mean curvature vector field, i.e. 
$H=\tr{A}\,\vec{n}$. Then,  $\forall
v\in\Gamma_0(N,f^*TM)$,
\begin{equation}
 {\boldsymbol{\delta}}\vol(N)(v)=-\int_N\langle v,H\rangle\,\vol_N    .
\end{equation}
In particular, $N$ is stationary for the volume functional within all compact
hypersurfaces with fixed boundary $\partial N$ if and only if its mean curvature
vanishes.
\end{teo}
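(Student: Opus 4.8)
The plan is to realise the volume functional as the Lagrangian functional $\calF_{\alpha_n}$ of Section \ref{TELs} and then read off its first variation from the Poincar\'e--Cartan machinery. First I would observe that the Gauss lift $\hat f:N\lrr\tsbu$ is a Legendre submanifold ($\hat f^*\theta=0$) and that, by the pull-back formulas \eqref{pulbacksdeadaptedcoframe}, $\hat f^*\alpha_n=\hat f^*e^{1\cdots n}=e^{1\cdots n}=\vol_N$, so that $\calF_{\alpha_n}(\hat f(N))=\int_N\hat f^*\alpha_n=\vol(N)$. Thus the Lagrangian $\Lambda=\alpha_n$ computes exactly the Riemannian volume of $N$. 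Crucially, any variation $F:N\times[0,1]\lrr M$ of the immersion lifts, through the Gauss map, to a variation $\hat F$ of $\hat f(N)$ \emph{by Legendre submanifolds} (each $\hat f_t$ being the unit-normal lift of $N_t$), with fixed boundary whenever $\partial F$ is fixed; hence the contact variational theory of Section \ref{TELs} applies directly.

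Next I would compute the Poincar\'e--Cartan form. Since $s=1$ on $\tsbu$, the structure equation \eqref{dalphazero} gives $\dx\alpha_n=\theta\wedge\alpha_{n-1}$, which is already of the shape $\theta\wedge\Psi$ with no $\dx\theta$-component (i.e. $\beta=0$ in \eqref{PoncareCartanform}); therefore $\Pi=\theta\wedge\alpha_{n-1}$ and one may take $\Psi=\alpha_{n-1}$. Substituting into \eqref{formulavariacionalcomintegral1} and using $\hat f^*\theta=0$ to annihilate the $\theta\wedge(\hat v\lrcorner\Psi)$ term leaves
\[ {\boldsymbol{\delta}}\vol(N)(v)=\int_N(\hat v\lrcorner\theta)\,\hat f^*\alpha_{n-1}\ , \]
where $\hat v$ is the lift of the variation field $v$.

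The remaining work is two geometric identifications. For the scalar factor I would use $\theta=\langle\xi,B\,\cdot\,\rangle$ from \eqref{mu}: the lifted field $\hat v$ projects to $v$ under $\dx\pi$ (since $\pi\circ\hat F=F$), so its horizontal part is $v$ in the sense of the splitting \eqref{derivdecomp}; as $B$ kills the vertical and maps the horizontal side isometrically onto the vertical one, and at $\hat f(x)$ one has $\xi=\vec{\nu}$, it follows that $\hat v\lrcorner\theta=\langle\xi,B\hat v\rangle=\langle v,\vec{\nu}\rangle$, the normal component of the variation. For the form factor I would write $\alpha_{n-1}$ in the adapted coframe as a sum of monomials $(-1)^{n-p}\,e^{1\cdots\widehat{p}\cdots n}\wedge e^{n+p}$, $1\le p\le n$ (each carrying $n-1$ horizontal and one vertical index), apply \eqref{pulbacksdeadaptedcoframe} so that $e^{n+p}\mapsto-\sum_k A^p_k e^k$ with only the $k=p$ summand surviving the wedge, and sum to obtain $\hat f^*\alpha_{n-1}=-(A^1_{1}+\cdots+A^n_{n})\,\vol_N=-\Tr{A}\,\vol_N$. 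Combining,
\[ {\boldsymbol{\delta}}\vol(N)(v)=-\int_N(A^1_{1}+\cdots+A^n_{n})\,\langle v,\vec{\nu}\rangle\,\vol_N=-\int_N n\langle v,H\rangle\,\vol_N\ , \]
and the minimality statement is then \eqref{stationarysubmanifold} applied to $\Psi=\alpha_{n-1}$: the lift is stationary iff $\hat f^*\alpha_{n-1}=0$, i.e. iff $\Tr{A}=0$, i.e. iff $H=0$.

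The main obstacle I anticipate is the sign bookkeeping in the last paragraph: pinning down the precise coframe form of $\alpha_{n-1}$ --- the signs $(-1)^{n-p}$ of its monomials, which come from the definition \eqref{alpha_i} of $\alpha_i$ through the operator $B$ --- and matching them against the orientation sign $e^{1\cdots\widehat{p}\cdots n}\wedge e^p=(-1)^{n-p}\vol_N$ and the minus sign in the pull-back \eqref{pulbacksdeadaptedcoframe}, so that every contribution collapses to the single clean factor $-\Tr{A}$. The identification $\hat v\lrcorner\theta=\langle v,\vec{\nu}\rangle$ is the other delicate point, but it is essentially forced by \eqref{derivdecomp} together with the isometry property of $B$ on the horizontal side already recorded in Section \ref{Geomofthetangentspherebundle}.
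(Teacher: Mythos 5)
Your proposal is correct and follows essentially the same route as the paper's own proof: identify $\vol(N)$ with $\calF_{\alpha_n}$ via \eqref{pulbacksdeadaptedcoframe}, read off the Poincar\'e--Cartan form $\Pi=\dx\alpha_n=\theta\wedge\alpha_{n-1}$ from Theorem \ref{derivadasdasnforms}, pull back $\alpha_{n-1}$ to get $-\Tr{A}\,\vol_N=-n\langle H,\vec{\nu}\rangle\vol_N$, and identify $v\lrcorner f^*\theta=\langle v,\vec{\nu}\rangle$ in \eqref{formulavariacionalcomintegral1}. Your extra care with the Legendre lifting of variations and the sign bookkeeping only makes explicit what the paper leaves implicit.
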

\begin{proof}
By \eqref{pulbacksdeadaptedcoframeum} we see immediately that the volume of an
$n$-dimensional submanifold $N$ in $M$ is given by
$\calF_{\alpha_0}(N)$, as in \eqref{Lagrangianfunctional}. Now, by Theorem \ref{derivadasdasnforms},
the Poincar\'e-Cartan form of $\alpha_0$ is just
$\dx\alpha_0=\theta\wedge\alpha_{1}=\Pi$. Thus a Legendre submanifold with fixed
boundary in $\tsbu$ is stationary for $\calF_{\alpha_0}$ if and only if
$\hat{f}^*{\alpha_{1}}=0$. Since 
\[ \alpha_{1}=e^{1+n}\wedge e^2\wedge\cdots\wedge e^n+e^1\wedge e^{2+n}\wedge
e^3\wedge\cdots\wedge  e^{n}+\: \text{etc} , \]
we have $\hat{f}^*{\alpha_{1}}=-(A^1_{1}+\cdots+A^n_{n})\,e^1\wedge\cdots\wedge
e^n=-\langle H,\vec{n}\rangle\,\vol_N$. Then we apply
\eqref{formulavariacionalcomintegral1}, noticing that $v\lrcorner {\hat{f}}^*\theta=\langle
v,\vec{n}\rangle$.
\end{proof}
We now consider the other $n$-forms $\alpha_i$ defined in Section
\ref{Thenewnforms}. They give in their own right interesting Lagrangian systems on
the contact manifold $\tsbu$. We define the functionals on the set of
compact immersed hypersurfaces of $M$:
\begin{equation}
 \calF_i(N):=\int_N\alpha_i.
\end{equation}

Let ${\sympol}_i(A)$ denote the elementary symmetric polynomial of degree $i$ on the eigenvalues $\lambda_1,\ldots,\lambda_n$ of $A$.

The proof of the following result is quite immediate and in line with the deduction of previous identities, cf. Section \ref{PfsELsystems}.
\begin{prop}\label{pullbackhatalphai}
For $0\leq i\leq n$, we have $\hat{f}^*\alpha_{i}=(-1)^i\sympol_i(A)\,\vol_N$.
\end{prop}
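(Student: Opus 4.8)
The plan is to reduce the statement to a determinantal identity for the second fundamental form and then to invoke the classical fact that the $i$-th elementary symmetric function of the eigenvalues of a matrix equals the sum of its $i\times i$ principal minors. The first step is to make the monomial structure of $\alpha_{n-i}$ explicit in the adapted coframe. Expanding the definition \eqref{alpha_itravez}, the normalization $n_{n-i}$ collapses the sum over $S_n$ so that $\alpha_{n-i}$ becomes the sum, over all subsets $S\subseteq\{1,\ldots,n\}$ with $|S|=n-i$, of the mixed monomials $\omega_S=\beta_1\wedge\cdots\wedge\beta_n$, where $\beta_k=e^k$ if $k\in S$ and $\beta_k=e^{n+k}$ if $k\notin S$ (here one uses $e^{n+k}\circ B=e^k$ on $T\tsb$). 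This is exactly the pattern displayed in \eqref{alfascomniguala2} and \eqref{alfascomniguala3}, and each $\omega_S$ carries $n-i$ horizontal factors and $i$ vertical ones; in particular no $e^0$ ever occurs.

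Next I would pull back by $\hat f$ using \eqref{pulbacksdeadaptedcoframe}: the horizontal factors are fixed, $\hat f^*e^k=e^k$, while each vertical factor becomes $\hat f^*e^{n+k}=-\sum_l A^k_l e^l$. Thus $\hat f^*\omega_S$ is a wedge of $n$ one-forms on the $n$-dimensional manifold $N$, hence a multiple of $\vol_N=e^{1\cdots n}$, and its coefficient is the determinant of the $n\times n$ matrix whose $k$-th row is the standard basis covector for $k\in S$ and is $-(\text{$k$-th row of }A)$ for $k\notin S$. Since the rows indexed by $S$ are standard basis vectors sitting on the diagonal, any permutation contributing to this determinant must fix $S$ pointwise and permute $\bar S=\{1,\ldots,n\}\setminus S$; the determinant therefore localizes to the principal block on $\bar S$, giving $\hat f^*\omega_S=(-1)^i\det(A_{\bar S,\bar S})\,\vol_N$, the sign $(-1)^i$ coming from the $i$ vertical rows each carrying a factor $-1$.

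Summing over all $S$ with $|S|=n-i$, equivalently over all $\bar S$ with $|\bar S|=i$, then yields $\hat f^*\alpha_{n-i}=(-1)^i\bigl(\sum_{|\bar S|=i}\det A_{\bar S,\bar S}\bigr)\vol_N$, and the bracketed sum is precisely $\sigma_i(A)$ by the principal-minor formula for elementary symmetric polynomials; the symmetry of $A$ noted after \eqref{pulbacksdeadaptedcoframe} guarantees that the eigenvalues $\lambda_1,\ldots,\lambda_n$ are real, so $\sigma_i(A)=\sigma_i(\lambda_1,\ldots,\lambda_n)$ is well defined. This is the asserted identity $\hat f^*\alpha_{n-i}=(-1)^i\sigma_i(A)\,\vol_N$. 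The only genuinely delicate point is the first step: fixing the exact monomial expansion of $\alpha_{n-i}$ together with its signs and the coefficient $1$ coming from $n_{n-i}$. Once that combinatorial normalization is pinned down, the determinant evaluation and the symmetric-function identity are routine linear algebra.
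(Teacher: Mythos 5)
Your proof is correct, and while it shares the paper's overall skeleton --- expand $\alpha_{n-i}$ in the adapted coframe and pull back through \eqref{pulbacksdeadaptedcoframe} --- the combinatorial organization and the final evaluation are genuinely different. The paper keeps the weight $n_{i}$ and the full signed sum over $\sigma\in S_n$ throughout, converts the contraction indices $k_j$ into permutations $\tau\in S_{n-i}$, and then evaluates by choosing an orthonormal \emph{eigenbasis} of the (symmetric) shape operator $A$, so that the elementary symmetric function appears directly as $n_i\sum_{\sigma}\lambda_{\sigma_1}\cdots\lambda_{\sigma_{n-i}}$. You instead collapse the normalization first, writing $\alpha_{n-i}=\sum_{|S|=n-i}\omega_S$ with unit coefficients (your check of this against \eqref{alfascomniguala2} and \eqref{alfascomniguala3} is exactly the right sanity test, and the collapse does work: the $(n-i)!\,i!$ in-block permutations cancel $n_{n-i}$, with the residual sign $\mathrm{sg}(\sigma_0)$ absorbed into the ordering of $\omega_S$), then evaluate each $\hat f^*\omega_S$ as a block determinant localizing to the principal minor $\det A_{\bar S,\bar S}$, and finish with the characteristic-polynomial identity $\sigma_i(A)=\sum_{|\bar S|=i}\det A_{\bar S,\bar S}$. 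What your route buys: no diagonalization is needed, so the computation is valid for an arbitrary endomorphism $A$ (symmetry of $A$, which the paper derives from $\hat f^*\dx\theta=0$, enters only to guarantee real eigenvalues so that $\sigma_i(A)$ has its usual meaning), and it establishes verbatim the paper's later remark that $\hat f^*\alpha_{n-i}$ equals $(-1)^i$ times the sum of all $i\times i$ principal minors of $A$ times $\vol_N$ --- a statement the paper's eigenbasis argument only gives after the fact. What the paper's route buys: once the eigenbasis is fixed, the symmetric function is read off with no further linear algebra, and the permutation machinery it uses is the same one recycled in the other computations of section \ref{Pomf}, so the proof sits more uniformly alongside the proofs of Proposition \ref{Basicstrutequations} and Theorem \ref{derivadasdasnforms}.
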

The Lagrangians referred in \cite[p. 32]{BGG}, defined over the
Grassmannian bundle of $n$-planes over flat Euclidean space, call our attention to
the following feature on the general setting of a Riemannian manifold $M$. We have the functional $\calF_n$ satisfying
\begin{equation}\label{IntegralofGaussKroneckercurvature}
 \calF_n(N)=(-1)^n\int_N \det(A)\,\vol_N  .
\end{equation}
$\det(A)$ is the so-called \textit{Gauss-Kronecker curvature} of $N$ when $M$ is
flat. Moreover, by \eqref{Ralphan}, if $M$ is flat we have $\dx\alpha_n=0$ and hence
a functional constant on the set of $\mathrm{C}^1$-close
isometrically immersed $N$, which partly confirms the Theorem of Chern-Gauss-Bonnet.

This result may be immediately generalised recurring to \eqref{dalphaicomcsc}. For example, suppose $M$ has constant sectional curvature $c\neq0$. Then $\dx\alpha_n=-c\,\theta\wedge\alpha_{n-1}$ and so the Gauss-Kronecker functional with fixed boundary admits $N$ as a stationary submanifold if and only if $\lambda_2\lambda_3\cdots\lambda_n+ \lambda_1\lambda_3\cdots\lambda_n+ \cdots
 +\lambda_1\lambda_2\cdots\lambda_{n-1}=0$. Equivalently,
\begin{equation}
 \exists j,k,\ j\neq k:\  \lambda_j=\lambda_k=0
 \quad  \ \mbox{or}\ \quad   \sum_{i=1}^n\frac{1}{\lambda_i}=0.
\end{equation}

As seen earlier, the functional $\calF_{1}$ corresponds with
\begin{equation}
 \calF_{1}(N)=-\int_N\langle H,\vec{n}\rangle\,\vol_N ,
\end{equation}
basically the integral of the mean curvature on immersed $f:N\rr M$.
\begin{teo} \label{teoremamenacurvaturefunctional}
Suppose $M$ has dimension $n+1\geq 3$. Then any compact isometric
immersed hypersurface ${f}:N\rr M$ satisfies, $\forall v\in\Gamma_0(N,f^*TM)$,
\begin{equation}
 {\boldsymbol{\delta}}\calF_1(N)(v)=\int_N\langle v,\vec{n}\rangle
 \bigl(r_{\vec{n}}-\mathrm{Scal}^M+\mathrm{Scal}^N\bigr)\vol_N    .
\end{equation}
Hence $N$ is stationary for the mean curvature functional
$\calF_{1}$ with fixed boundary $\partial N$ if and only if 
\begin{equation}
 \mathrm{Scal}^N=\mathrm{Scal}^M-r_{\vec{n}}
\end{equation}
where $r_{\vec{n}}=\ric(\vec{n},\vec{n})$ is induced from the Ricci tensor of $M$ and $\mathrm{Scal}^N,\mathrm{Scal}^M$ are the respective scalar curvature functions.

In particular, if the metric on $M$ is Einstein, say $\ric=nc\langle\ ,\ \rangle$, then $N$ has stationary integral of the mean curvature with fixed boundary if and only if $N$ has constant scalar curvature $\mathrm{Scal}^N=n^2c$.
\end{teo}
\begin{proof}
By Gauss and Codazzi equations, we see the curvatures of $N$ and $M$ satisfy $R^M_{ijij}=R^N_{ijij}-\lambda_i\lambda_j,\
\,\forall1\leq i,j\leq n$, in an orthonormal basis diagonalising the second
fundamental form $A$. Hence
\begin{eqnarray*}
 \mathrm{Scal}^M &=& \sum_{i,j=0}^nR^M_{ijij} \\
    &=&   2\sum_{j=0}^nR^M_{j0j0}+ \sum_{i,j=1}^nR^M_{ijij}  \\
    &=&  2r_{\vec{n}}+\mathrm{Scal}^N-2\sympol_2(A) ,
\end{eqnarray*}
which is actually a known formula, since $\sympol_2(A)=
\sum_{i<j}\lambda_i\lambda_j$. Then, recurring to the fundamental exterior differential system on $\tsbu\rr M$, by \eqref{dalphaum} we
know $\dx\alpha_{1}=\theta\wedge(2\alpha_{2}-r\,\alpha_0)$, which is the Poincar\'e-Cartan form, where $r$ was defined in \eqref{defofr}. Hence, by \eqref{formulavariacionalcomintegral1}, we have
\begin{equation*}
  {\boldsymbol{\delta}}\calF_1(N)(v) =\int_N(v\lrcorner {\hat{f}}^*\theta)\,{\hat{f}}^*(2\alpha_2-r\,\alpha_0)= \int_N\langle v,\vec{n}\rangle
 \bigl(2\sympol_2(A)-r_{\vec{n}}\bigr)\vol_N .
\end{equation*}
Indeed ${\hat{f}}^*r$ in the previous notation agrees with $r_{\vec{n}}$. The first part follows. For the second, the formula yields $\mathrm{Scal}^N=(n+1)nc-nc=n^2c$.
\end{proof}

We recall a similar known result with the hypothesis of Euclidean ambient, however within a different variational principle. It is the following: constant scalar curvature hypersurfaces $N$ of Euclidean space $\R^{n+1}$ are stationary for $\calF_{1}$, when varying within the class of volume preserving submanifolds. 

The theory yields partly a classical result.
\begin{teo}[cf. {\cite[Theorem B]{Rei}}]    \label{teo_partofTheoremBofRei}
Suppose $M$ has constant sectional curvature $c$. A compact isometric immersed hypersurface $N$ satisfies, for all $0\leq i\leq n$ and $v\in\Gamma_0(N,f^*TM)$,
\begin{equation}
 {\boldsymbol{\delta}}\calF_i(N)(v)=(-1)^i\int_N\langle v,\vec{n}\rangle
 \bigl(c(n-i+1)\sympol_{i-1}(A)-(i+1)\sympol_{i+1}(A)\bigr)\vol_N .   
\end{equation}
$N$ is stationary for $\calF_i$ if and only if
$c(n-i+1)\sympol_{i-1}(A)-(i+1)\sympol_{i+1}(A)=0$.
\end{teo}
\begin{proof}
 Immediate from formula \eqref{dalphaicsc}.
\end{proof}

Now, for an Einstein metric on the ambient manifold $M$, we see through a formula in the proof of Theorem \ref{teoremamenacurvaturefunctional} that $\calF_{2}$ leads to an Euler-Lagrange equation on the scalar curvature of $N$.
\begin{teo}
Let $M$ be a Riemannian manifold of dimension $n+1>2$ and constant sectional curvature $c$. Then a compact hypersurface $N$ is a critical point of the scalar curvature functional $\int_N\mathrm{Scal}^N\,\vol_N$ with fixed boundary if and only if the eigenvalues $\lambda_1,\ldots,\lambda_n$ of $A$ satisfy (assume $\lambda_3=0$ for $n=2$)
\begin{equation}
 \sum_{j_1<j_2<j_3}\lambda_{j_1}\lambda_{j_2}\lambda_{j_3}
                        +\frac{c}{6}(n-1)(n-2)(\lambda_1+\cdots+\lambda_n)=0    .
\end{equation}
That is, $6\sympol_3(A)+c(n-1)(n-2)\sympol_1(A)=0$.
\end{teo}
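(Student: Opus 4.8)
The plan is to recognize the total scalar curvature functional as $\calF_\Lambda$ for a suitable linear combination $\Lambda$ of Griffiths forms, and then run the Euler-Lagrange machinery of section \ref{TELs}. First I would reuse the Gauss equation computation already carried out in the proof above: for any immersed hypersurface $N$ one has $\mathrm{Scal}^M = 2r_{\vec{\nu}} + \mathrm{Scal}^N - 2\sigma_2(A)$. Specializing to constant sectional curvature $k$ in dimension $n+1$, where $\mathrm{Scal}^M = n(n+1)k$ and $r_{\vec{\nu}} = \ric(\vec{\nu},\vec{\nu}) = nk$, this rearranges to
\[ \mathrm{Scal}^N = n(n-1)k + 2\,\sigma_2(A). \]
Invoking Proposition \ref{pullbackhatalphai} in the cases $i=0$ and $i=2$, namely $\vol_N = \hat{f}^*\alpha_n$ and $\sigma_2(A)\,\vol_N = \hat{f}^*\alpha_{n-2}$, I can exhibit the integrand as the pullback of a global $n$-form on $\tsbu$:
\[ \mathrm{Scal}^N\,\vol_N = \hat{f}^*\Lambda, \qquad \Lambda = n(n-1)k\,\alpha_n + 2\,\alpha_{n-2}. \]
Thus the scalar curvature functional equals $\calF_\Lambda$, and the problem reduces to computing the Euler-Lagrange system of this Lagrangian.

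Next I would compute the Poincar\'e-Cartan form. Applying the constant-curvature structure equations \eqref{dalphaicsc} with $s=1$ to the two summands --- for $i=n$ this gives $\dx\alpha_n = \theta\wedge\alpha_{n-1}$, and for $i=n-2$ it gives $\dx\alpha_{n-2} = \theta\wedge(3\,\alpha_{n-3} - k(n-1)\,\alpha_{n-1})$ --- and collecting the $\alpha_{n-1}$ contributions with coefficient $k\bigl(n(n-1) - 2(n-1)\bigr) = k(n-1)(n-2)$, I expect
\[ \dx\Lambda = \theta\wedge\Psi, \qquad \Psi = k(n-1)(n-2)\,\alpha_{n-1} + 6\,\alpha_{n-3}. \]
Since the right-hand side already carries an explicit factor $\theta$ (no $\dx\theta\wedge\beta$ term occurs), $\Pi = \dx\Lambda = \theta\wedge\Psi$ is directly the Poincar\'e-Cartan form with $\Psi$ the desired factor; any ambiguity in $\Psi$ lies in the ideal generated by $\theta$ and is annihilated by $\hat{f}^*$ because $\hat{f}^*\theta = 0$.

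Finally, by the variational formula \eqref{formulavariacionalcomintegral1} and the criterion \eqref{stationarysubmanifold}, the hypersurface $N$ is stationary with fixed boundary if and only if $\hat{f}^*\Psi = 0$. Using Proposition \ref{pullbackhatalphai} once more, $\hat{f}^*\alpha_{n-1} = -\sigma_1(A)\,\vol_N$ and $\hat{f}^*\alpha_{n-3} = -\sigma_3(A)\,\vol_N$, so the condition becomes
\[ 6\,\sigma_3(A) + k(n-1)(n-2)\,\sigma_1(A) = 0, \]
which is exactly the stated identity once $\sigma_1(A) = \lambda_1 + \cdots + \lambda_n = n\|H\|$ and $\sigma_3(A) = \sum_{j_1<j_2<j_3}\lambda_{j_1}\lambda_{j_2}\lambda_{j_3}$ are substituted. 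The computation is short, so I expect the only genuine subtlety to be the bookkeeping in the Poincar\'e-Cartan step --- verifying the cancellation that yields the coefficient $k(n-1)(n-2)$ --- together with checking that the degenerate case $n=2$ is subsumed, where $\alpha_{n-3} = \alpha_{-1} = 0$ and $k(n-1)(n-2) = 0$ force the equation to hold identically, consistent with the convention $\lambda_3 = 0$ and reflecting that $\int_N \mathrm{Scal}^N\,\vol_N$ is then variationally trivial for fixed boundary.
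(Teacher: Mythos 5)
Your proposal is correct and follows essentially the same route as the paper: you build the same Lagrangian $\Lambda = n(n-1)k\,\alpha_n + 2\,\alpha_{n-2}$ (the paper writes it as $(n+1)nk\,\alpha_n-2nk\,\alpha_n+2\alpha_{n-2}$), perform the same computation of $\dx\Lambda$ via \eqref{dalphaicsc}, and arrive at the same Poincar\'e--Cartan form $\theta\wedge\bigl(6\alpha_{n-3}+k(n-1)(n-2)\,\alpha_{n-1}\bigr)$. Your closing step via Proposition \ref{pullbackhatalphai} and the check of the degenerate case $n=2$ simply spell out what the paper compresses into ``the result follows easily.''
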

\begin{proof}
As seen above, we have $\mathrm{Scal}^N=\mathrm{Scal}^M-2r_{\vec{n}}+2\sympol_2(A)$ corresponding here to the Lagrangian $\Lambda=(n+1)nc\,\alpha_0-2nc\,\alpha_0+2\alpha_{2}=(n-1)nc\,\alpha_0+2\alpha_2$. Recurring to \eqref{dalphaicsc} we compute
\begin{eqnarray*}
 \dx\Lambda &=& (n-1)nc\,\theta\wedge\alpha_{1}+
                   2\theta\wedge\bigl(3\alpha_{3}-c(n-1)\,\alpha_{1}\bigr)\\
            &=& \theta\wedge\bigl(6\alpha_{3}+c(n-1)(n-2)\,\alpha_{1}\bigr)
\end{eqnarray*}
and thus, having found the Poincar\'e-Cartan form, the result follows easily.
\end{proof}

Note the case $n=2$ is always satisfied and invariant of the ambient manifold as expected
by Gauss-Bonnet Theorem.

For $n=1$, we remark $\calF_{0}$ gives the unparametrised geodesics as length stationary submanifolds and $\calF_1$ gives a trivial condition. The following functional, with $t\in\R$, seems
also particularly interesting for further studies with our system
\[ \calF(t,N)=\sum_{i=0}^n\int_Nt^i\alpha_{n-i}=\int_N\det{(t1-A)}\,\vol_N   .  \]

The celebrated integral identities of Hsiung-Minkowski (\cite{Hsiung,Katsurada}) are easy to deduce using our fundamental exterior differential system. Following their notation, we define $M_i=\sympol_i(A)/\binom{n}{i}$.
\begin{teo}[Hsiung-Minkowski identities]  \label{teo_HsiungMinkowski}
Let $N$ be any given closed oriented immersed hypersurface of Euclidean space $\R^{n+1}$. Let $\vec{n}$ be the unit normal to $N$ with the induced orientation. And let $X$ be the position-vector vector field (from the origin). Then, for any $0\leq i\leq n$,
\begin{equation}
 \int_N(M_i+\langle X,\vec{n}\rangle M_{i+1})\,\vol_N=0.
\end{equation}
\end{teo}
The proof of this theorem with the fundamental exterior differential system is much simpler than the original. We leave it as an exercise. Notice the result is not true in wider contexts, since the ${\cal L}_{X^h}\alpha_i$ are then not so easy to compute. The case is that the lift of $X$, even for Euclidean space, is no longer a Killing vector field of the Sasaki metric.

It is in \cite{Katsurada} that we see the formula for any $i$, attributed to Hsiung. Of course, if $\partial N\neq\emptyset$, then a more general result also follows.

\subsection{Infinitesimal symmetries}

We now wish to explore further properties of the exterior differential system on the
tangent sphere bundle of a Riemannian $n+1$-manifold. We resume with previous
notation for $\tsb$ with any radius $s$. For many of the following notions we recur
to \cite{BCGGG}, \cite{BGG} and \cite{IveyLan}.

It is easy to see there are no non-zero Cauchy characteristics of the contact structure
$(\tsb,\theta)$, i.e. there exists no vector field $v\in\XIS_\tsb\backslash0$ on
$\tsb$ such that $v\lrcorner\calI\subset\calI$ where $\calI$ is the $\dx$-closed
differential ideal generated by $\theta$. The Lie algebra $\g_\calI$ of infinitesimal
symmetries $v$ of $\calI$, a set containing the Cauchy characteristics, is easy to compute
formally. $v$ is now required to satisfy ${\cal L}_v\calI\subset\calI$. On an adapted
frame on $\tsb$, we let $v=\sum_{i=0}^{2n}v_ie_i$ and it is of course enough to check
that the 1-form ${\cal L}_v\theta$ is still a multiple of $\theta$. By the Cartan
formula,
\begin{equation}
 {\cal L}_v\theta=\dx(v\lrcorner\theta)+v\lrcorner\dx\theta\in\calI\ \
\Longleftrightarrow\ \left\{\begin{array}{l}
                             s\,\dx v_0(e_i)=-v_{i+n} \\ s\,\dx v_0(e_{i+n})=v_{i}
                            \end{array}, \right. \forall0\leq i\leq n   . 
\end{equation}
In particular $B^{\mathrm{t}}\xi=\theta^\sharp$, the geodesic spray vector
field on $\tsb$,
is an infinitesimal symmetry. Recall from \eqref{igualdade1} that $\Omega^j_\tsb\subset\calI,\ \forall j>n$, where $\Omega_\tsb^j$ is the space of $j$-forms. Hence, for each $i$, the Euler-Lagrange system spanned by $\{\theta,\dx\theta,\alpha_i\}$ forms a $\dx$-closed ideal. 

Let $\calI_{-1}=\calI$ and let $\calI_i$ be the ideal generated by
$\calI_{i-1}\cup\{\alpha_{n-i}\}$, for each $0\leq i\leq n$. Clearly
$\calI_{i}\subset\calI_{i+1}$ and $\calI_n$ agrees with the fundamental differential system.

Now we must return to $M$ with constant sectional curvature.

We have both a $\dx$-closed ideal filtration, such that $\dx\calI_i\subset\calI_{i+1}$, and a \textit{Lie filtration}, cf. \eqref{dalphaicsc}:
\begin{equation}
 {\cal L}_{\theta^\sharp}\calI_i\subset\calI_{i+1}  .
\end{equation}
One considers the tautological horizontal vector field in the class of constant sectional
curvature metrics, since, even in such case and choosing any \textit{constant}
Lagrangian $n$-form $\Lambda$ from the differential system $\calI_n$, it is barely decidable if it admits non-trivial infinitesimal symmetries.

We want a $\Lambda=\sum_{i=0}^nX_i\alpha_i$ with real constant coefficients $X_i$
generating a $\dx$-closed ideal 
${\cal J}=\{\theta,\dx\theta,\Lambda\}\subset\calI_n$, and the goal is to
guarantee $\theta^\sharp\in\g_{\cal J}$. Letting $c$ denote the sectional curvature,
then we compute:
\begin{equation}\label{Liederivativebygeodesicspray}
  {\cal L}_{\theta^\sharp}\Lambda=\theta^\sharp\lrcorner\dx\Lambda=
\sum_{i=0}^ns^2X_i(\frac{1}{s^2}(i+1)\alpha_{i+1}-c(n-i+1)\alpha_{i-1})   .
\end{equation}
No component of this is a multiple of $\dx\theta$ because there are no
$e^{j(j+n)}$ factors in any of the terms of any of the $\alpha_i$. So it can only be a multiple $\frac{a}{s^2}\Lambda$ of $\Lambda$ itself for some real function $a$, and hence this becomes
\begin{equation}
 \sum_{j=0}^n(jX_{j-1}-s^2c(n-j)X_{j+1}-aX_j)\alpha_j=0    .
\end{equation}
In particular $a$ is a constant. Let us put the coefficients in linear system $LX=0$. Then the determinant of the $n+1$-squared matrices representing $L$, the first three being
\begin{equation}
 \left[\begin{array}{cc} -a& -{cs^2}\\ 1& -a         \end{array}\right]\qquad
\left[\begin{array}{ccc} -a& -2{cs^2} & \\ 1 &  -a& -{cs^2} \\ &2& -a    
\end{array}\right]\qquad
\left[\begin{array}{cccc} -a& -3{cs^2} &  & \\ 1 & -a& -2{cs^2} & \\ &2&-a& -{cs^2} \\ &
&3&-a  \end{array}\right]   ,
\end{equation}
is
\begin{equation}
 \det{L}=\left\{\begin{array}{ll}
 \ \ (a^2+{cs^2})(a^2+9{cs^2})(a^2+25{cs^2})\cdots(a^2+n^2{cs^2}) & \mbox{for
$n$ odd}  \\
 -a(a^2+4{cs^2})(a^2+16{cs^2})(a^2+36{cs^2})\cdots(a^2+n^2{cs^2}) & \mbox{for
$n$ even} \end{array} \right. .
\end{equation}
Since the vanishing of this determinant assures the non-trivial solutions, the conclusions are as follows.
\begin{teo}
Let $M$ have constant sectional curvature $c$ and let $\Lambda$ be defined, as above, on the tangent sphere bundle $\tsb$. Then the condition $\theta^\sharp\in\g_{\cal J}$ is equivalent to solving ${\cal L}_{\theta^\sharp}\Lambda=a\Lambda$ for constant $a$. Moreover\\
i) in case $n$ is even or $M$ is flat ($c=0$), there always exists a
1-dimensional solution subspace $\R\Lambda$, hence satisfying $a=0$;\\
ii) in case $n$ is odd and $c>0$, then there is no non-trivial solution;\\
iii) in case $c\leq0$, then there exist $n+1$ solution subspaces $\R\Lambda_j$
satisfying 
\begin{equation}
 {\cal L}_{\theta^\sharp}\Lambda_j=a_j^\pm\Lambda_j\ \,\mbox{with}\,\
\left\{\begin{array}{ll}
a_j^\pm=\pm(2j+1)s\sqrt{-c},\ \forall0\leq j\leq \frac{n-1}{2} & \mbox{for $n$ odd}\\
a_j^\pm=\pm(2j)s\sqrt{-c},\ \forall0\leq j\leq \frac{n}{2} & \mbox{for $n$ even}
\end{array}   \right.   .
\end{equation}
Furthermore, whenever $a=0$, we have $\dx\Lambda=0$. 
\end{teo}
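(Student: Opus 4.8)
The plan is to reduce the statement to a purely linear-algebraic analysis of the tridiagonal matrix $L$ already assembled above, whose nonzero kernel vectors are exactly the admissible coefficient tuples of $\Lambda$. The preceding discussion already performs this reduction: by \eqref{mu} the field $\theta^\sharp=B^{\mathrm{ad}}\xi$ is a multiple of $e_0$, and since $Be_0$ is proportional to $\xi$ while $\alpha$ annihilates both horizontal vectors and $\xi$, one has $\theta^\sharp\lrcorner\alpha_i=0$ for every $i$, whence ${\cal L}_{\theta^\sharp}\Lambda=\theta^\sharp\lrcorner\dx\Lambda$. Feeding \eqref{dalphaicsc} into this and collecting the coefficient of each $\alpha_j$, and using that no resulting term can be a $\dx\theta$-multiple (the $\alpha_i$ carry no $e^{j(j+n)}$), the condition $\theta^\sharp\in\g_{\cal J}$ becomes ${\cal L}_{\theta^\sharp}\Lambda=c\Lambda$, i.e. the homogeneous system $LX=0$ with $X=(X_0,\dots,X_n)^{t}$. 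Thus a nontrivial $\Lambda$ with eigenvalue $c$ exists precisely when $c$ is a real eigenvalue of the tridiagonal operator $N$ obtained from $L$ by deleting its diagonal $-c$, and the whole theorem is the translation of the spectrum of $N$ into geometric language.

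First I would establish the factorisation of $\det L$, which is the computational heart. Rather than iterating the three-term recurrence $P_m=-cP_{m-1}+\epsilon(m-1)(n-m+2)P_{m-2}$ produced by cofactor expansion, I would observe that $N$ is exactly the operator $E-\epsilon F$ acting in the $(n+1)$-dimensional irreducible representation of $\mathfrak{sl}_2$, with $E,F$ the standard operators normalised by $Ee_j=(n-j+1)e_{j-1}$ and $Fe_j=(j+1)e_{j+1}$: these reproduce precisely the super- and subdiagonal entries $n-j$ and $-\epsilon j$. Since any $g\in\mathfrak{sl}_2$ with eigenvalues $\pm\lambda$ on the fundamental representation has eigenvalues $(n-2m)\lambda$, $m=0,\dots,n$, on the spin-$\frac n2$ representation, and since $E-\epsilon F=\bigl(\begin{smallmatrix}0&1\\-\epsilon&0\end{smallmatrix}\bigr)$ has eigenvalues $\pm\sqrt{-\epsilon}$ there, the eigenvalues of $N$ are $(n-2m)\sqrt{-\epsilon}$. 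Writing $\det L=\det(N-cI)=\prod_{m=0}^{n}\bigl((n-2m)\sqrt{-\epsilon}-c\bigr)$ and pairing the index $m$ with $n-m$ gives $(\mu_m-c)(\mu_{n-m}-c)=c^2+(n-2m)^2\epsilon$, which collapses the product into the two displayed forms according to the parity of $n$, the unpaired $m=\frac n2$ contributing the factor $-c$ when $n$ is even.

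With the factorisation in hand the three cases are immediate from the sign of $\epsilon=s^2k$. For (ii), $n$ odd and $k>0$ make every factor $c^2+(2j+1)^2\epsilon$ strictly positive for real $c$, so $L$ is nonsingular and no nontrivial $\Lambda$ exists. For (i), either $n$ is even, so the factor $-c$ is present and $\det L$ vanishes at $c=0$, or $k=0$, in which case $\epsilon=0$, $N=E$ is nilpotent and $c=0$ is its only eigenvalue; in both situations $c=0$ gives ${\cal L}_{\theta^\sharp}\Lambda=0$, and the solution space is one-dimensional because a Jacobi matrix with nonvanishing off-diagonal entries has simple spectrum (for $k=0$, $\ker E$ is the one-dimensional highest-weight line). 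For (iii), $k\le0$ makes $\sqrt{-\epsilon}$ real, so $N$ has the $n+1$ real eigenvalues $(n-2m)\sqrt{-\epsilon}$; each simple eigenvalue yields a line $\R\Lambda_j$ with ${\cal L}_{\theta^\sharp}\Lambda_j=c_j\Lambda_j$, the values $c_j^\pm$ being the roots of the factors $c^2+(2j+1)^2\epsilon$ (resp. $c^2+(2j)^2\epsilon$) of $\det L$, as listed. Finally, if $c=0$ then $\theta^\sharp\lrcorner\dx\Lambda=0$; but $\dx\Lambda=\theta\wedge\beta$ with $\beta$ a combination of the $\alpha_i$, so $\theta^\sharp\lrcorner\beta=0$ and hence $\theta^\sharp\lrcorner\dx\Lambda=s^2\beta$, forcing $\beta=0$ and $\dx\Lambda=0$.

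The step I expect to be the main obstacle is the evaluation of $\det L$: without the $\mathfrak{sl}_2$ identification one is left to solve the variable-coefficient recurrence above and to guess and verify the closed product by induction, which is doable but opaque, whereas the representation-theoretic reading of $N$ as $E-\epsilon F$ is what makes the factorisation and its parity dichotomy transparent and reduces the three conclusions to inspection.
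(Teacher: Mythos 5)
Your proposal is correct, and it supplies by a genuinely different method precisely the step the paper leaves unproved. The paper's own treatment consists of the reduction to the linear system $LX=0$ (which you correctly take over from the pre-theorem discussion, including the Cartan-formula computation of ${\cal L}_{\theta^\sharp}\Lambda$ and the observation that no $\dx\theta$-component can occur), the display of the first three matrices, and the bare assertion of the factorisation of $\det L$, from which the three cases are read off; the implicit method behind that assertion is the three-term recurrence from cofactor expansion, settled by induction. You instead recognise $N=L+cI$ as the image of $e-\epsilon f$ under the $(n+1)$-dimensional irreducible representation of $\mathfrak{sl}_2$: your normalisations $Ee_j=(n-j+1)e_{j-1}$, $Fe_j=(j+1)e_{j+1}$ do satisfy $[E,F]=H$ and do reproduce the off-diagonal entries $n-j$ and $-\epsilon j$, so the eigenvalues $(n-2m)\sqrt{-\epsilon}$, $m=0,\dots,n$, come for free, and pairing $m$ with $n-m$ yields the factors $c^2+(n-2m)^2\epsilon$, with the unpaired middle index contributing $-c$ exactly when $n$ is even. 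This buys a conceptual, essentially computation-free proof of the determinant identity (and explains why the spectrum is an arithmetic progression), where the paper's implicit route is elementary but opaque. You also prove two points the paper passes over: the one-dimensionality of each solution space and the closing implication $c=0\Rightarrow\dx\Lambda=0$ via $\theta^\sharp\lrcorner(\theta\wedge\beta)=s^2\beta$. Two small cautions: your appeal to ``Jacobi matrices have simple spectrum'' should be replaced by the rank argument you hint at (delete the first column and last row of $L$ to get a triangular matrix with nonzero diagonal, so every eigenvalue has geometric multiplicity one), since $L$ is not symmetric and for $k=0$ the spectrum is not simple, only the eigenspace is a line; and the roots of $c^2+(2j+1)^2\epsilon$ are $\pm(2j+1)s\sqrt{-k}$ (recall $\epsilon=s^2k$), so the stated values $c_j^\pm$ carry a hidden factor of $s$ unless $s=1$ --- a blemish inherited from the paper's statement, not introduced by you.
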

The last assertion follows from \eqref{Liederivativebygeodesicspray}. The theory requires further a study of the conservation laws of the Euler-Lagrange system $\{\theta,\dx\theta,\Lambda\}$. Besides the trivial ones, $\theta^\sharp\lrcorner\Lambda$ and $\delta\Lambda$, which in fact vanish by Proposition \ref{dasteriscoalphaicsc}, we question if there may exist any other.

\section{Proofs of main formulae}
\label{Pomf}

\subsection{An algebraic technique}
\label{Auat}

We start by recalling an algebraic tool which creates new differential forms from tensors on a given manifold. Such technique was introduced in \cite{AlbSal1} and the proofs of all assertions regarding it are quite straightforward.

Given any $p$-tensor $\eta$ and any endomorphisms $B_i$, $1\leq i\leq p$, of the tangent bundle of the given manifold, we let
\begin{equation}
\eta\circ (B_1\wedge\cdots\wedge B_p) 
\end{equation}
denote the $p$-form defined by ($S_p$ is the symmetric group)
\begin{equation}\label{esttensorcont}
\eta\circ (B_1\wedge\cdots\wedge B_p)(v_1,\ldots,v_p)=\sum_{\sigma\in
S_p}\mathrm{sg}(\sigma)\,\eta(B_1v_{\sigma_1},\ldots,B_pv_{\sigma_p}) .
\end{equation}

If $\eta$ is a $p$-form, then $\eta\circ(\wedge^p1)=p!\,\eta$. For a wedge of $p$
1-forms we have the most important identities:
\begin{equation}  \label{techniqueofdifferentialforms}
\begin{split}
\eta_1\wedge\ldots\wedge\eta_p\circ(B_1\wedge\ldots\wedge B_p)\ =\ \sum_{\sigma\in S_p} 
\eta_1\circ B_{\sigma_1}\wedge\ldots\wedge\eta_p\circ B_{\sigma_p} \ \ \ \\
=\ \sum_{\tau\in S_p} \mathrm{sg}({\tau})\:\eta_{\tau_1}\circ
B_{1}\wedge\ldots\wedge\eta_{\tau_p}\circ B_{p} .
\end{split}
\end{equation}
For a 2-form $\eta$ and any endomorphism $B$, clearly \,$\eta\circ B\wedge
B\,(v,w)=2\eta(Bv,Bw)$. For a 3-form and two endomorphisms $B,C$, letting
$\cyclic$ denote cyclic sum, we have
\begin{equation}
\eta\circ(B\wedge B\wedge C)(v,w,z)=2\cyclic_{v,w,z}\eta(Bv,Bw,Cz) .
\end{equation} 

A great advantage of the construction of forms as above is that it obeys a simple Leibniz rule under either Lie or covariant differentiation, with no minus signs attached:
\begin{equation}  \label{derivacao}
\begin{split}
 \calD(\eta\circ(B_1\wedge\cdots\wedge B_p)) 
=(\calD\eta)\circ (B_1\wedge\cdots\wedge B_p) \qquad \\
 +\sum_{j=1}^p\eta\circ(B_1\wedge\cdots\wedge\calD B_j\wedge\cdots\wedge B_p) .
\end{split}
\end{equation}

\subsection{Proofs for Section \ref{Geomofthetangentspherebundle}}
\label{PfSGeom}

We resume with the theoretical setting of Section \ref{TnedsoS}. We are now ready for the proofs of the auxiliary and main formulae there. The theory recurs to the now classical geometry of tangent bundles initiated by Sasaki (cf. \cite{Alb5,Alb3,Alb4,Blair,Sakai,Tash}).

To encourage the reading which follows, we start by giving yet another explanation of
the well-known formulae \eqref{pullbackconnectionproperties}. In a way, these are the
defining equations of a connection in relation with the horizontal subspace $H$. We
may always assume a non-vertical vector is of the form $w=(\dx v_1)_x(v_2)$ where
$x\in M$ and $v_1,v_2\in\XIS_M$ are vector fields. We just need the map $v_1$ into
$TM$ to be defined on a neighbourhood of $x$. As it is not so difficult to see,
$w^v=((\dx v_1)_x(v_2))^v=\na_{v_2}v_1$. Then since $\pi\circ v_1=1_M$ and
$v_1^*\xi_x=\xi_{v_1(x)}=v_1(x)$, we find
\[ \na^*_w\xi=(v_1^*\pi^*\na)_{v_2}v_1^*\xi=\na_{v_2}v_1=w^v  .   \]
Furthermore, if one considers a curve $\gamma$ in $M$ and its velocity and acceleration, then one has that $\ddot{\gamma}$ sits in the sub-bundle $\ker(\na^*_\cdot\xi)$ if and only if the curve satisfies the equations system of a geodesic of $M$.

\begin{proof}[Proof of (\ref{torsaodenablaasterisco}) and Proposition \ref{dmu}]
Recall the connection $D$ on $\tsb\subset TM$ induced from $\na^*$, given in Section
\ref{Geomofthetangentspherebundle}: \,$D=\na^\divideontimes-\frac{1}{2}\calR$. To prove it is torsion-free, we may likewise compute the torsion of $\na^\divideontimes$. First, it is easy to see that $(T^{\na^\divideontimes})^h=(T^{\na^*})^h=\pi^*T^\nabla=0$. Secondly, disregarding the symmetric component $\na^\divideontimes_{y}z-\na^*_yz=\frac{1}{s^2}\langle y^v,z^v\rangle\xi$, for any two vector fields $y,z$ on
$\tsb$, cf. \eqref{nablastar}, the vertical part is 
\begin{eqnarray*}
(T^{\na^\divideontimes}(y,z))^v &=& \na^\divideontimes_{y}z^v-\na^\divideontimes_{z}y^v -[y,z]^v\\
&=& \na^*_{y}\na^*_{z}\xi-\na^*_{z}\na^*_{y}\xi -\na^*_{[y,z]}\xi\\
&=& \calR(y,z) .
\end{eqnarray*}
This proves (\ref{torsaodenablaasterisco}). Regarding the 1-form $\theta$ defined in
(\ref{mu}), we use the same connection to compute firstly:
\begin{eqnarray*}
 (D_y\theta)z &=& y(\theta(z))-\theta(D_yz) \\
&=&y\langle\xi,Bz\rangle-\langle\xi,BD_yz\rangle\\
&=&\langle\na^*_y\xi,Bz\rangle+\langle
\xi,\na^*_yBz\rangle-\langle\xi,B\na^*_yz\rangle  \\
&=&\langle y^v,Bz\rangle\ .
\end{eqnarray*}
Hence, by a well-known formula,
\[  \dx\theta(y,z)\ =\ (D_y\theta)z-(D_z\theta)y\ =\ \langle y,Bz\rangle-\langle
z,By\rangle \]
as we wished.
\end{proof}

\subsection{Proofs for Section \ref{Thenewnforms}}
\label{PfSThenewnforms}

Note that $\theta$ is actually defined on the manifold $TM$. Undoubtedly $\theta$ corresponds with
the pullback of the Liouville 1-form on the cotangent bundle through the
musical isomorphism induced by the metric. Hence $\dx\theta$ corresponds with the
pullback of the canonical, exact symplectic form of $T^*M$ (cf. the general case of
a connection with torsion in \cite{Alb3}). Using the adapted direct orthonormal frame
$\{e_0,e_1,\ldots,e_n,e_{n+1},\ldots,e_{2n}\}$, locally defined on $\tsb$, we prove
the basic structure equations.
\begin{proof}[Proof of Proposition \ref{Basicstrutequations}]
First note (recall the notation $e^{ab}=e^a\wedge e^b$)
\begin{eqnarray*}
(\dx\theta)^i &=&
\sum_{j_1=1}^ne^{(n+j_1)j_1}\,\wedge\,\cdots\,\wedge\,\sum_{j_i=1}^ne^{(n+j_i)j_i}\\
& = &  \sum_{1\leq j_1<\ldots<j_i\leq n}
i!\,e^{(n+j_1)j_1}\wedge\cdots\wedge e^{(n+j_i)j_i}  .
\end{eqnarray*}
In particular, one proves the claim that $(\dx\theta)^n=
(-1)^{\frac{n(n+1)}{2}}n!\,e^{1\ldots n(n+1)\ldots(2n)}$, enough to ensure we
have a contact structure. Now 
\begin{eqnarray*}
 *(\dx\theta)^i
&=& i!(-1)^{\frac{n(n+1)}{2}} \sum_{1\leq k_1<\ldots<k_{n-i}\leq n}e^0\wedge
e^{(n+k_1)k_1} \wedge\cdots\wedge e^{(n+k_{n-i})k_{n-i}} \\
&=& (-1)^{\frac{n(n+1)}{2}} \frac{i!}{(n-i)!s}\,\theta\wedge(\dx\theta)^{n-i} .
\end{eqnarray*}
This proves the first part of (\ref{Basicstrutequations2}) and, in particular, (\ref{Basicstrutequations1}) due to $**=1_{\Lambda^*}$.

Now, applying the second identity of (\ref{techniqueofdifferentialforms}), we find
\begin{eqnarray*}
 \alpha_i &=& n_i\,\alpha_n\circ(B^{n-i}\wedge1_{T\tsb}^{i}) 	\\  &=& n_i\sum_{\sigma\in S_n}\mathrm{sg}(\sigma)\bigl(e^{(n+\sigma_1)}\circ
  B\wedge\cdots\wedge e^{(n+\sigma_{n-i})}\circ B\wedge 
   e^{(n+\sigma_{n-i+1})}\wedge\cdots\wedge e^{(n+\sigma_n)}\bigr)  \\
 &=& n_i\sum_{\sigma\in S_n}\mathrm{sg}(\sigma)\bigl(e^{\sigma_1}\wedge\cdots\wedge
e^{\sigma_{n-i}}\wedge e^{(n+\sigma_{n-i+1})}\wedge\cdots\wedge e^{(n+\sigma_n)}\bigr)  .
\end{eqnarray*}
Since $n_i=n_{n-i}$, we find
\begin{eqnarray*}
 *\alpha_{n-i}
&=& n_i\sum_{\sigma}\mathrm{sg}(\sigma)*\bigl(e^{\sigma_1}\wedge\cdots\wedge
e^{\sigma_{i}}\wedge e^{(n+\sigma_{i+1})}\wedge\cdots\wedge
e^{(n+\sigma_n)}\bigr)\\
&=& n_i\sum_{\sigma}\mathrm{sg}(\sigma)\,(-1)^{n+(n-i)n}\,e^0\wedge
e^{\sigma_{i+1}}\wedge\cdots\wedge e^{\sigma_n}\wedge
e^{(n+\sigma_1)}\wedge\cdots\wedge e^{(n+\sigma_{i})}\\
&=& \frac{n_i}{s}\sum_{\tau}\mathrm{sg}(\tau)\,(-1)^{in+i(n-i)}\,\theta\wedge
e^{\tau_{1}}\wedge\cdots\wedge e^{\tau_{n-i}}\wedge
e^{(n+\tau_{n-i+1})}\wedge\cdots\wedge e^{(n+\tau_{n})}\\
&=& \frac{(-1)^{i}}{s}\,\theta\wedge\alpha_{i} ,
\end{eqnarray*}
where the $\tau$ equal the $\sigma$ composed with an obvious index
permutation. Formulae $\dx\theta\wedge\alpha_i=0,\
\alpha_i\wedge\alpha_j=0,\ \forall j\neq n-i$, are then very easy to deduce.
\end{proof}
Now let us see the proof of an important result.
\begin{proof}[Proof of Theorem \ref{derivadasdasnforms} in Section \ref{Thenewnforms}]
Recall, for all $0\leq i\leq n$,
\begin{equation*}
 \alpha_i=n_i\,\alpha_n\circ(B^{n-i}\wedge1^{i})
\end{equation*}
where $\alpha_n=\frac{\xi}{\|\xi\|}\lrcorner\inv{\pi}\vol_M$ requires the
vertical pullback and 1 denotes the identity endomorphism of $T\tsb$.

With the torsion-free linear connection $D$ on $\tsb$ and with the adapted frame
$\{e_0,\ldots,e_{2n}\}$ together with its dual coframe, we are well equipped to compute a formula for $\dx\alpha_i$. It is obtained through the
well-known general
formula $\dx\alpha_i=\sum_je^j\wedge D_j\alpha_i$. Hence we need the following
computation: $\forall v,v_1,\ldots,v_n$ vector fields on $\tsb$,
\begin{equation}\label{D_valpha_i}
\begin{split}
 \lefteqn{D_v\alpha_i(v_1,\ldots,v_n) \ =}\\
&\ = v\cdot(\alpha_i(v_1,\ldots,v_n))-
\sum_{k=1}^n\alpha_i(v_1,\ldots,\na^\divideontimes_vv_k-\frac{
1}{2}\calR(v,v_k),\ldots,v_n)\\
&\ =\na^*_v\alpha_i(v_1,\ldots,v_n) 
+\frac{1}{2}\sum_k\alpha_i(v_1,\ldots,\calR(v,v_k),\ldots,v_n)\\
&\ = n_i(\na^*_v\alpha_n)\circ(B^{n-i}\wedge1^{i})(v_1,\ldots,v_n)+
 \frac{1}{2}\sum_k\alpha_i(v_1,\ldots,\calR(v,v_k),\ldots,v_n) .
\end{split}
\end{equation}
We have used the fact that any $\alpha_i$ vanishes in the direction of $\xi$ and that
$\na^*B=\na^*1=0$. Now, since $\na\vol_M=0$, it follows for both horizontal and
vertical lifts that $\na^*\pi^*\vol_M=\na^*\inv{\pi}\vol_M=0$. Then for any
$v_j\in\XIS_{\tsb}$, and since $v_j(\|\xi\|)=0$, we find
\begin{equation}\label{nablaasterisco_v_jalpha}
\begin{split}
\na^*_{v_j}\alpha_n&=\
\bigl(v_j\bigl(\frac{1}{\|\xi\|}\bigr)\xi+\frac{1}{\|\xi\|}\na^*_{v_j}
\xi\bigr)\lrcorner(\inv{\pi}\vol_M)+\frac{\xi}{\|\xi\|}\lrcorner(\na^*_{v_j}\inv
{\pi}\vol_M)\\
&=\ \frac{1}{s}v_j^v\lrcorner(\inv{\pi}\vol_M) .
\end{split}
\end{equation}

Now one sees by (\ref{D_valpha_i}) and (\ref{nablaasterisco_v_jalpha}) that
$\dx\alpha_i$ has obvious \textit{flat} and \textit{curved} components. We compute
firstly such flat part of $\dx\alpha_i$. Proceeding by the mentioned formula,
\begin{equation*}
 \begin{split}
& \lefteqn{ \sum_{j=0}^{2n}e^j\wedge(\na^*_j\alpha_n)\circ(B^{n-i}\wedge1^{i})
\ =}\\
& \ \ \ =\ \frac{1}{s}\sum_{j=n+1}^{2n}
e^j\wedge\bigl((e_j\lrcorner\inv{\pi}\vol_M)\circ(B^{n-i}\wedge1^{i})\bigl)\\
 &\ \ \ =\ \frac{1}{s^2}\sum_{j=n+1}^{2n} (-1)^{j-n}\,
e^j\wedge\bigl((\xi^\flat\wedge
e^{(n+1)\cdots\hat{j}\cdots (2n)})\circ(B^{n-i}\wedge1^{i})\bigr)\\
&\ \ \ =\ \frac{1}{s^2}\sum_{j=1}^{n} (-1)^{j}\, e^{j+n}\wedge\bigl((\xi^\flat\wedge
e^{(n+1)\cdots \widehat{j+n}\cdots (2n)})\circ(B^{n-i}\wedge1^{i})\bigr) 
\quad=\ (\star).
 \end{split}
\end{equation*}
In the following step we define $B_1=\cdots=B_{n-i}=B$ and $B_{n-i+1}=\cdots=B_{n}=1$. By the first identity of formula (\ref{techniqueofdifferentialforms}), we have in
particular
\begin{equation}\label{alphaicombes}
 \alpha_n\circ(B^{n-i}\wedge1^{i})=\sum_{\sigma\in S_n} 
e^{n+1}\circ B_{\sigma_1}\wedge\cdots\wedge e^{2n}\circ B_{\sigma_n} .
\end{equation}
Applying the same technique from (\ref{techniqueofdifferentialforms}) in the computation above yields:
\begin{eqnarray*}
(\star)\ \ &=& \frac{1}{s^2}\sum_{j=1}^{n}\sum_{\sigma\in S_n} (-1)^{j}\,
e^{j+n}\wedge\xi^\flat\circ B_{\sigma_1}\wedge e^{n+1}\circ
B_{\sigma_2}\wedge\cdots  \\
& & \hspace{14mm}\cdots\wedge e^{j+n-1}\circ B_{\sigma_j}\wedge e^{j+n+1}\circ
B_{\sigma_{j+1}}\wedge\cdots\wedge e^{2n}\circ B_{\sigma_n}\\
&=& \frac{1}{s^2}\sum_{j=1}^{n}\sum_{\sigma\in S_n:\ \sigma_1\leq n-i}
\theta\wedge e^{n+1}\circ B_{\sigma_2}\wedge\cdots
\wedge e^{j+n-1}\circ B_{\sigma_j}\wedge e^{j+n}\wedge \\
& & \hspace{34mm} \wedge e^{j+n+1}\circ B_{\sigma_{j+1}}\wedge\cdots\wedge
e^{2n}\circ B_{\sigma_n}
\end{eqnarray*}
because $\theta=\xi^\flat\circ B$ and $\xi^\flat\circ1=0$. Now notice the role of $B_{\sigma_1}$. Letting $B_1=\cdots=B_{n-i-1}=B$ and $B_{n-i}=\cdots=B_{n}=1$, we may continue the computation:
\begin{eqnarray*}
&=& \frac{n-i}{s^2}\,\theta\wedge\sum_{j=1}^{n}\sum_{\tau\in S_n:\ \tau_j=n}
e^{n+1}\circ B_{\tau_1}\wedge\cdots
\wedge e^{j+n-1}\circ B_{\tau_{j-1}}\wedge e^{j+n}\wedge \\
& & \hspace{5cm}\wedge e^{j+n+1}\circ B_{\tau_{j+1}}\wedge\cdots\wedge
e^{2n}\circ B_{\tau_n}  \\ 
& = & \frac{n-i}{s^2}\,\theta\wedge\sum_{j=1}^{n}\sum_{\tau:\ \tau_j=n}
e^{n+1}\circ B_{\tau_1}\wedge\cdots
\wedge e^{j+n}\circ B_{\tau_{j}}\wedge \cdots\wedge e^{2n}\circ B_{\tau_n}\\
& =& \frac{n-i}{s^2}\,\theta\wedge\alpha_n\circ(B^{n-i-1}\wedge1^{i+1}) .
\end{eqnarray*}
Notice in case $i=0$ this expression vanishes because $\theta$ would have never
appeared. Henceforth, assuming for a moment that $M$ is flat, we have deduced, cf.
(\ref{dalphai}),
\begin{equation}
 \dx\alpha_i=\frac{n_i(n-i)}{s^2n_{i+1}}\,\theta\wedge\alpha_{i+1}
=\frac{i+1}{s^2}\,\theta\wedge\alpha_{i+1} .
\end{equation}

Now let us see the \emph{curved} side of (\ref{D_valpha_i}). First
recall that $\calR(v,\ )$ vanishes on any vertical direction $v$. Then, writing
as it is usual $R_{abcd}=\langle R^\na(e_c,e_d)e_b,e_a\rangle$, $\forall
a,b,c,d\in\{0,\ldots,n\}$ and using the adapted frame $e_0,\ldots,e_{2n}$ on $\tsb$, we have
\[  \calR_{e_j}=\sum_{p=1}^n\langle R^\na(e_j,\ )se_0,e_{p}\rangle e_{p+n} =
\sum_{q=0,\ p=1}^nsR_{p0jq}\,e^q\otimes e_{p+n}  \ . \]
It is easy to see we just have to simplify the following expression, coming from
(\ref{D_valpha_i}) and given for all $v_1,\ldots,v_n\in\XIS_{\tsb}$ by
\begin{equation*}
\begin{split}
\lefteqn{ \sum_{k=1}^n\alpha_i(v_1,\ldots,\calR_{e_j}v_k,\ldots,v_n)\ =\ }\\
&=
\sum_{q=0}^n\sum_{k,p=1}^nsR_{p0jq}\,\alpha_i(v_1,\ldots,e^q(v_k).e_{p+n},\ldots,v_n) \\
 &= \sum_{q=0}^n\sum_{k,p=1}^n\sum_{\sigma\in S_{n}:\ \sigma_1=k}\frac{1}{(n-1)!}
sR_{p0jq}(-1)^{k-1}\sg(\tilde{\sigma})e^q(v_{\sigma_1})\,\alpha_i(e_{p+n},v_{\sigma_2},\ldots,v_{\sigma_n}) \\
&= \sum_{q=0,\ p=1}^nsR_{p0jq}\,e^{q}\wedge
 e_{p+n}\lrcorner\alpha_i\ (v_1,\ldots,v_n)   
\end{split}
\end{equation*}
where $\tilde{\sigma}$ above is the permutation which transforms $\sigma_2,\ldots,\sigma_n$ back into the ordered set $1,\ldots,\widehat{k},\ldots,n$. Finally the tensors introduced in (\ref{dalphai},\ref{Ralphai}) are coherent with the
computation of $\dx\alpha_i$ from above. Indeed,
\begin{equation}\label{formulaRxialphai}
 \begin{split}
  \calR\alpha_i\ =&\  \sum_{j=0}^ne^j\wedge\frac{1}{2}
 \sum_{k=1}^n\alpha_i(\ldots,\calR(e_j,\ ),\ldots)  \\
=&\ \sum_{0\leq j<q\leq n}\sum_{p=1}^nsR_{p0jq}\,e^{jq}\wedge
e_{p+n}\lrcorner\alpha_i 
 \end{split}
\end{equation}
as wished.
\end{proof}
The previous formula may be partly simplified if (\ref{alphaicombes}) is used:
\begin{equation}\label{partlysimplified}
\begin{split}
\lefteqn{e_{p+n}\lrcorner\alpha_i\ = }\\
&= n_i \sum_{k=1}^n(-1)^{k-1}\sum_{\sigma\in S_n} 
e^{n+1}\circ B_{\sigma_1}\wedge\cdots\wedge e_{p+n}\lrcorner(e^{k+n}\circ
B_{\sigma_k})\wedge \cdots\wedge e^{2n}\circ B_{\sigma_n} \\
&= n_i\sum_{k=1}^n\sum_{\sigma_k\geq n-i+1}(-1)^{k-1}\,
e^{n+1}\circ B_{\sigma_1}\wedge\cdots\wedge \delta_{pk}\wedge\cdots
\wedge e^{2n}\circ B_{\sigma_n} \\
&= n_i\sum_{\sigma\in S_n:\: \sigma_p\geq n-i+1}(-1)^{p-1}\, e^{n+1}\circ
B_{\sigma_1}\wedge\cdots\wedge \widehat{e^{n+p}\circ
B_{\sigma_p}}\wedge\cdots\wedge e^{2n}\circ B_{\sigma_n} .
\end{split}
\end{equation}

The cases $\calR\alpha_0$ and $\calR\alpha_{1}$ having the particular
expressions appearing in (\ref{dalphazero},\ref{dalphaum}) may now be deduced.
Clearly, by \eqref{formulaRxialphai},
\begin{equation}
 \calR\alpha_0=0.
\end{equation}
Recurring to the formula above, we find
\begin{eqnarray*}
\lefteqn{\calR\alpha_{1}\ =}\\
&=& \sum_{0\leq j<q\leq n}\sum_{p=1}^nsR_{p0jq}\,e^{jq}\wedge
e_{p+n}\lrcorner\alpha_{1} \\
&=& n_1\sum_{0\leq j<q\leq n}\sum_{p=1}^nsR_{p0jq}\,e^{jq}\wedge  
\sum_{k=1}^n\sum_{\sigma\in S_n:\:\sigma_k=n}(-1)^{k-1}\delta_{pk}\,
e^{1}\wedge\cdots\widehat{e^k}\cdots\wedge e^{n} \\
&=& \frac{1}{(n-1)!}\sum_{0\leq j<q\leq n}\sum_{k=1}^nsR_{k0jq}\,e^{jq}\wedge
(-1)^{k-1}(n-1)!\,e^{1}\wedge\cdots\widehat{e^k}\cdots\wedge e^{n} \\
&=& \sum_{q=1}^nsR_{q00q}\,e^{0}\wedge e^{1}\wedge\cdots\wedge{e^q}\wedge\cdots\wedge
e^{n} \\
&=& -\frac{1}{s}\ric(\xi,\xi)\,\vol .
\end{eqnarray*}

\subsection{Proofs for Section \ref{subsection_Somefunctorialproperties}}

We just have to finish the proof of Theorem \ref{teorema_restrictiontoTGeodhypers}. Let us prove the first equality in \eqref{restrictiontoTGeodhypers_formula}. Clearly, using the mirror map, $B\vec{n}^v=0$. On the other hand, $\vec{n}^h\lrcorner\alpha_n=0$. Now we consider again formula (\ref{alphaicombes}) with the same $B_j$:
\[ \vec{n}^v\lrcorner\alpha_i=n_i\,\vec{n}^v\lrcorner\sum_{\sigma\in S_n}e^{n+1}\circ B_{\sigma_1}\wedge\cdots\wedge e^{2n}\circ B_{\sigma_n} . \]
We may certainly assume $\vec{n}^h=e_1$, so that $e^{n+j}(\vec{n}^v)=\delta_{j1}$, and proceed
\begin{eqnarray*}
 &=& n_i\sum_{k=n-i+1}^n\sum_{\sigma\in S_n:\:\sigma_1=k}
 e^{n+2}\circ B_{\sigma_2}\wedge\cdots\wedge e^{2n}\circ B_{\sigma_n} \\
&=&-n_ii\,\alpha_{n-1}^N\circ(B^{n-i}\wedge1^{i-1}) \\
&=& -\frac{1}{(n-i)!(i-1)!}\,\alpha_{n-1}^N\circ(B^{n-1-(i-1)}\wedge1^{i-1})\\
&=& -\alpha_{i-1}^N  .
 \end{eqnarray*}
For the second equality in \eqref{restrictiontoTGeodhypers_formula}, which is very similar and straightforward to deduce as the first, we return to canonical methods. Let $v_1,\ldots,v_{n-1}$ be any vector fields on $S_sN$. Then we define $w_1=\vec{n}^h,w_2=v_1,\ldots,w_n=v_{n-1}$ and notice $Bw_1=\vec{n}^v$. As such,
\begin{align*}
  &\ \ \ \vec{n}^h\lrcorner\alpha_{i}(v_1,\ldots,v_{n-1}) \\
 & = \alpha_i(w_1,w_2,\ldots,w_n) \\
 & = n_i\sum_{j=1}^{n-i}\sum_{\sigma:\:\sigma_{j}=1}\sg(\sigma)\, \alpha_n (Bw_{\sigma_1},\ldots,Bw_{\sigma_j},\ldots,Bw_{\sigma_{n-i}},w_{\sigma_{n-i+1}}, \ldots,w_{\sigma_n})\\
 & = n_i\sum_{j=1}^{n-i}\sum_{\sigma:\:\sigma_{j}=1}\sg(\sigma)(-1)^{j-1}\, \alpha_n (\vec{n}^v,Bw_{\sigma_1},\ldots,\widehat{Bw_{\sigma_{j}}},\ldots,     Bw_{\sigma_{n-i}},w_{\sigma_{n-i+1}}, \ldots,w_{\sigma_n}) \\
 & = n_i\sum_{j=1}^{n-i}\sum_{\tau\in S_{n-1}}\sg(\tau)\,(\vec{n}^v\lrcorner
\alpha_n)(Bw_{\tau_2},\ldots,\ldots,Bw_{\tau_{n-i}},w_{\tau_{n-i+1}}, \ldots,w_{\tau_n}) \\
 & = -\frac{n-i}{i!(n-i)(n-i-1)!}\sum_{\tau}\sg(\tau)\,\alpha_{n-1}^N(Bw_{\tau_2},\ldots,\ldots,Bw_{\tau_{n-i}},w_{\tau_{n-i+1}}, \ldots,w_{\tau_n} )  \\
& = -\alpha_{i}^N(w_2,\ldots,w_n) 
\end{align*}
and the result follows.

Now let us prove Theorem \ref{metricaEinsteinealpha2}.
\begin{proof}[Proof of formula \eqref{dasteriscoalphadois}]
 Clearly from \eqref{Basicstrutequations2}, \eqref{dalphai} and \eqref{Ralphai}
we have
\begin{eqnarray*}
\lefteqn{ \dx*\alpha_{n-2} \: =\:-\frac{1}{s}\,\theta\wedge\dx\alpha_{2} }\\
&=&-\frac{1}{s}\, \theta\wedge\calR\alpha_{2} \\
&=& -\,\theta\wedge\sum_{1\leq j<q\leq n}
\sum_{p=1}^nR_{p0jq}\,e^{jq}\wedge e_{p+n}\lrcorner\alpha_{2} \\
&=& -n_2\,\theta\wedge\sum_{1\leq j<q\leq n} \sum_{p,a,b=1}^nR_{p0jq}\,e^{jq}\sum_{\sigma\in S'_n} e_{p+n}\lrcorner(e^{n+1}\circ
B_{\sigma_1}\wedge\cdots\wedge e^{2n}\circ B_{\sigma_n})
\end{eqnarray*}
As usual, here we let $B_1=\cdots=B_{n-2}=B,\ B_{n-1}=B_n=1$, and then continue the
computation (we let $S'_n=\{\sigma\in S_n:\ \sigma_a=n-1,\:\sigma_b=n\}$):
\begin{equation*}
 \begin{split}
&= -\frac{1}{2}\,\theta\wedge\sum_{1\leq j<q\leq n}\sum_{p=1}^nR_{p0jq}\,e^{jq}
\biggl(\sum_{a<b}e_{p+n}\lrcorner (e^{1}\wedge\cdots\wedge
e^{n+a}\wedge\cdots\wedge e^{n+b}\wedge\cdots\wedge e^{n}) \biggr. \\
& \hspace{20mm} \biggl.+\sum_{b<a} e_{p+n}\lrcorner (e^{1}\wedge\cdots\wedge
e^{n+b}\wedge\cdots\wedge e^{n+a}\wedge\cdots\wedge e^{n}) \biggr)  \ \ \ \ \ \mbox{(cont.)}
\end{split}
\end{equation*}
\begin{equation*}
 \begin{split}
&= -\,\theta\wedge\sum_{1\leq j<q\leq n}\sum_{a<b}\sum_{p=1}^nR_{p0jq}\,e^{jq}
e_{p+n}\lrcorner (e^{1}\wedge\cdots\wedge
e^{n+a}\wedge\cdots\wedge e^{n+b}\wedge\cdots\wedge e^{n})  \\
&= \theta\wedge\sum_{1\leq j<q\leq n}\sum_{a<b}\biggl( R_{a0jq}\,
e^{jq}(-1)^{a+b}e^{n+b}\wedge e^{1}\wedge\cdots\wedge\widehat{e^{a}}\wedge\cdots\wedge
\widehat{e^{b}}\wedge\cdots\wedge e^{n} \biggr. \\
&  \hspace{20mm} \biggl.-R_{b0jq}\,e^{jq}(-1)^{a+b}e^{n+a}\wedge
e^{1}\wedge\cdots\wedge\widehat{e^{a}}\wedge\cdots\wedge
\widehat{e^{b}}\wedge\cdots\wedge e^{n} \biggr) \\
&= s\,e^0\wedge\sum_{a<b}(-R_{a0ab}\,e^{n+b}+R_{b0ab}\,e^{n+a})\wedge e^{1\cdots n}\\
&=(\sum_{a<b}+\sum_{b<a})s\,R_{a0ab}\,e^{n+b}\wedge\vol\\
&=\sum_{a,b=1}^ns\,R_{a0ab}\,e^{n+b}\wedge\vol .
\end{split}
\end{equation*}
Recalling \eqref{defofrho}, we may conclude.
\end{proof}

\subsection{Proofs for Section \ref{Atte}}

\begin{proof}[Proof of formula \eqref{dalphaicsc} in Section \ref{Atte}]
For constant sectional curvature, which we recall is equi\-valent to
$R_{qpij}=c(\delta_{iq}\delta_{jp}-\delta_{ip}\delta_{jq})$, one immediately finds from
formula \eqref{formulaRxialphai} above that
\begin{equation*}
 \calR\alpha_i = -c\,\theta\wedge\sum_{q=1}^ne^q\wedge e_{q+n}\lrcorner\alpha_i .
\end{equation*}
Now, assuming $B_1=\ldots=B_{n-i}=B$ and $B_{n-i+1}=\ldots=B_n=1$, the relevant component is
\begin{eqnarray*}
\lefteqn{ \sum_{q=1}^ne^q\wedge e_{q+n}\lrcorner\alpha_i \ =}\\
&=& n_i\sum_{q=1}^ne^q\wedge e_{q+n}\lrcorner \sum_{\sigma\in S_n}e^{n+1}\circ
B_{\sigma_1}\wedge
\cdots\wedge e^{2n}\circ B_{\sigma_n}\\
&=& n_i\sum_{q=1}^n\sum_\sigma e^{n+1}\circ B_{\sigma_1}\wedge
\cdots\wedge e^{n+q}\circ B_{\sigma_q}(e_{n+q})e^q\wedge\cdots\wedge e^{2n}\circ
B_{\sigma_n}\\
&=& n_i\sum_{q}\sum_{\sigma:\ \sigma_q>n-i} e^{n+1}\circ B_{\sigma_1}\wedge
\cdots\wedge e^q\wedge\cdots\wedge e^{2n}\circ B_{\sigma_n}\qquad
\mbox{(cf. \eqref{partlysimplified})} \\
&=& in_i\sum_{q}\sum_{\sigma:\ \sigma_q=n-i+1} e^{n+1}\circ B_{\sigma_1}\wedge
\cdots\wedge e^q\wedge\cdots\wedge e^{2n}\circ B_{\sigma_n} .
\end{eqnarray*}
Here we may change to $B_1=\ldots=B_{n-i}=B_{n-i+1}=B$ and $B_{n-i+2}=\ldots=B_n=1$
and then, resuming the computation,
\begin{equation*}
\begin{split}
&= in_i\sum_{q=1}^n\sum_{\tau\in S_n:\ \tau_q=n-i+1} e^{n+1}\circ
B_{\tau_1}\wedge \cdots\wedge e^{n+q}\circ B_{\tau_q}\wedge\cdots\wedge e^{2n}\circ
B_{\tau_n}\\
&= \frac{in_i}{n_{i-1}}\,\alpha_{i-1} \\
&= (n-i+1)\,\alpha_{i-1} .
\end{split}
\end{equation*}
Formula 
$\dx\alpha_i=\frac{i+1}{s^2}\,\theta\wedge\alpha_{i+1}+\calR\alpha_i=\theta\wedge
(\frac{i+1}{s^2}\,\alpha_{i+1}-c(n-i+1)\,\alpha_{i-1})$ follows.
\end{proof}

\subsection{Proofs for Section \ref{ELsottsb}}
\label{PfsELsystems}

In Section \ref{ELsottsb} the reader finds a statement about the pullback of the Lagrangians $\alpha_i$ by the lift $\widehat{f}$ to $\tsb$ of an isometric immersion $f:N\rr M$ and a subsequent formula
$\widehat{f}^*\alpha_i=(-1)^{i}\sympol_{i}(A)\,\vol_N$ which is worth checking in
detail.
\begin{proof}[Proof of Proposition \ref{pullbackhatalphai}]
We have seen that
\begin{eqnarray*}
 \alpha_{n-i} &=&
n_{i}\sum_{\sigma\in S_n}\mathrm{sg}(\sigma)\,e^{\sigma_1}\wedge\cdots\wedge
e^{\sigma_{i}}\wedge e^{(n+\sigma_{i+1})}\wedge\cdots\wedge e^{(n+\sigma_n)} .
\end{eqnarray*}
Hence, using \eqref{pulbacksdeadaptedcoframeum} and \eqref{pulbacksdeadaptedcoframedois} and permutations $\tau$ on the set
$\{i+1,\ldots, n\}$,
\begin{equation*}
 \begin{split}
\lefteqn{(-1)^{n-i}\,\widehat{f}^*\alpha_{n-i}\ = }\\
&= n_{i}\sum_{\sigma\in S_n}\sum_{j=1}^{n-i}\sum_{k_j=1}^{\ n}
\mathrm{sg}(\sigma)\,A^{\sigma_{i+1}}_{k_1}\cdots
A^{\sigma_n}_{k_{n-i}}\,e^{\sigma_1}\wedge\cdots\wedge e^{\sigma_{i}}\wedge
e^{k_{1}}\wedge\cdots\wedge e^{k_{n-i}}\\
 &= n_{i}\sum_{\sigma}\sum_{\tau\in
S_{n-i}}\mathrm{sg}(\sigma)\,A^{\sigma_{i+1}}_{\sigma_{\tau_{i+1}}}\cdots
A^{\sigma_n}_{\sigma_{\tau_{n}}}\,e^{\sigma_1}\wedge\cdots\wedge e^{\sigma_{i}}\wedge
e^{\sigma_{\tau_{i+1}}}\wedge\cdots\wedge e^{\sigma_{\tau_{n}}}\\
&= n_{i}\sum_{\sigma}\sum_{\tau\in
S_{n-i}}\mathrm{sg}(\tau)\,A^{\sigma_{i+1}}_{\sigma_{\tau_{i+1}}}\cdots
A^{\sigma_{n}}_{\sigma_{\tau_{n}}}\,e^{1}\wedge\cdots\wedge e^{n} .
 \end{split}
\end{equation*}
Letting $e_1,\ldots,e_n$ be a direct orthonormal basis of
eigenvectors of $A$ with eigenvalues $\lambda_j$ (recall $A$ is symmetric), we see
\begin{eqnarray*}
\lefteqn{ (-1)^{n-i}\,\widehat{f}^*\alpha_{n-i}\ =\ n_i\sum_{\sigma\in
S_n}\lambda_{\sigma_1}\cdots\lambda_{\sigma_{n-i}}\,\vol_N\ =}\\ 
&=& \sum_{1\leq j_1<\cdots<j_{n-i}\leq n}
\lambda_{j_1}\cdots\lambda_{j_{n-i}}\,\vol_N\ =\ \sympol_{n-i}(A)\,\vol_N
\end{eqnarray*}
as we wished.
\end{proof}




\vspace{6mm}

\ \\
The author acknowledges the support of Funda\c{c}\~{a}o Ci\^{e}ncia e Tecnologi\-a,
Portu\-gal, through CIMA-U\'E and the Project PTDC/MAT/118682/2010.

\ \\
R. Albuquerque\hspace{9mm} $|$\hspace{9mm}  \texttt{rpa@uevora.pt}\\
Departamento de Matem\'atica da Universidade de \'Evora
e Centro de Investiga\c c\~ao em Mate\-m\'a\-ti\-ca e Aplica\c c\~oes (CIMA-U\'E), Rua Rom\~ao Ramalho, 59, 671-7000 \'Evora, Portugal.


\end{document}